\documentclass[a4paper,12pt]{article}

\textwidth=160mm
\textheight=235mm
\setlength{\topmargin}{-10mm}
\setlength{\evensidemargin}{0mm}
\setlength{\oddsidemargin}{0mm}

%
%


\usepackage{mymacros}

%
%

\newcommand{\NS}{\operatorname{NS}}
\newcommand{\Eff}{\operatorname{Eff}}

\newcommand{\ch}{\operatorname{ch}}
\newcommand{\td}{\operatorname{td}}
\newcommand{\Euler}{\operatorname{Euler}}
\newcommand{\Gm}{\bG_{\mathrm{m}}}

\newcommand{\bp}{\bsp}

\newcommand{\sing}{{\mathrm{sing}}}
\newcommand{\That}{{\widehat{T}}}

\newcommand{\HA}{\scrH_A}
\newcommand{\HB}{\scrH_B}
\newcommand{\HAZ}{H_{A, \, \bZ}}
\newcommand{\HBZ}{H_{B, \, \bZ}}

\newcommand{\HAC}{H_{A, \, \bC}}

\newcommand{\av}{{\Check{a}}}
\newcommand{\bv}{{\Check{b}}}
\newcommand{\cv}{{\Check{c}}}

\newcommand{\fv}{{\Check{f}}}

\newcommand{\hv}{{\Check{h}}}

\newcommand{\Xv}{{\Check{X}}}

\newcommand{\Yv}{{\Check{Y}}}
\newcommand{\Uv}{{\Check{U}}}

\newcommand{\scMv}{{\Check{\scM}}}
\newcommand{\scYv}{{\Check{\scY}}}
\newcommand{\frakYv}{{\Check{\frakY}}}
\newcommand{\varphiv}{{\Check{\varphi}}}
\newcommand{\bTv}{{\Check{\bT}}}
\newcommand{\Deltav}{{\Check{\Delta}}}
\newcommand{\Sigmav}{{\Check{\Sigma}}}

\newcommand{\bsgammav}{{\Check{\bsgamma}}}
\newcommand{\Mir}{\operatorname{Mir}}
\newcommand{\amb}{{\mathrm{amb}}}

\newcommand{\vc}{{\mathrm{vc}}}

%
%

\title{A note on exceptional unimodal singularities
and K3 surfaces}
\author{Masanori Kobayashi, Makiko Mase, and Kazushi Ueda}
\date{}
\pagestyle{plain}

%
%

\begin{document}

\maketitle

\begin{abstract}
We study the relation
between the graded stable derived categories
of 14 exceptional unimodal singularities
and the derived categories of K3 surfaces
obtained as compactifications
of the Milnor fibers.
As a corollary,
we obtain a basis of the numerical Grothendieck group
similar to the one
given by Ebeling and Ploog
\cite{Ebeling-Ploog_MCPS}.
\end{abstract}

\section{Introduction}
 \label{sc:introduction}

Let $f \in \bC[x, y, z]$
be a weighted homogeneous polynomial
defining one of {\em Arnold's 14 exceptional unimodal singularities}
\cite{Arnold_ICM}.
The list of corresponding weight systems
$(a, b, c; h) = \deg (x, y, z; f)$
is given in Table \ref{tb:exceptional}.
The quotient ring
$
 R = \bC[x,y,z] / (f)
$
is the homogeneous coordinate ring
of a weighted projective line $\bX$
in the sense of Geigle and Lenzing
\cite{Geigle-Lenzing_WPC, Lenzing_WCARAF},
and the {\em Dolgachev number}
$\bsdelta = (\delta_1, \delta_2, \delta_3)$
of the singularity is defined
as the orders
of the isotropy groups of $\bX$.
\begin{table}
$$
\begin{array}{c|c|c|c|c}
 \text{name} & (a, b, c; h) &
 \bsdelta & \bsgamma & \text{dual} \\
  \hline
 E_{12} & (6,14,21;42) & (2,3,7) & (2,3,7) & E_{12} \\
  \hline
 E_{13} & (4,10,15;30) & (2,4,5) & (2,3,8) & Z_{11} \\
  \hline
 Z_{11} & (6,8,15;30) & (2,3,8) & (2,4,5) & E_{13} \\
  \hline
 E_{14} & (3,8,12;24) & (3,3,4) & (2,3,9) & Q_{10} \\
  \hline
 Q_{10} & (6,8,9;24) & (2,3,9) & (3,3,4) & E_{14} \\
  \hline
 Z_{12} & (4,6,11;22) & (2,4,6) & (2,4,6) & Z_{12} \\
  \hline
 W_{12} & (4,5,10;20) & (2,5,5) & (2,5,5) & W_{12} \\
  \hline
 Z_{13} & (3,5,9,18) & (3,3,5) & (2,4,7) & Q_{11} \\
  \hline
 Q_{11} & (4,6,7;18) & (2,4,7) & (3,3,5) & Z_{13} \\
  \hline
 W_{13} & (3,4,8,16) & (3,4,4) & (2,5,6) & S_{11} \\
  \hline
 S_{11} & (4,5,6,16) & (2,5,6) & (3,4,4) & W_{13} \\
  \hline
 Q_{12} & (3,5,6;15) & (3,3,6) & (3,3,6) & Q_{12} \\
  \hline
 S_{12} & (3,4,5;13) & (3,4,5) & (3,4,5) & S_{12} \\
  \hline
 U_{12} & (3,4,4;12) & (4,4,4) & (4,4,4) & U_{12}
\end{array}
$$
\caption{14 exceptional unimodal singularities}
\label{tb:exceptional}
\end{table}

On the other hand,
one can choose a distinguished basis
$(\alpha_i)_{i=1}^{\gamma_1 + \gamma_2 + \gamma_3}$
of vanishing cycles of $f$
so that the Coxeter--Dynkin diagram
is given by the diagram $\That(\gamma_1, \gamma_2, \gamma_3)$
shown in Figure \ref{fg:Dynkin-Milnor}.
The triple $\bsgamma=(\gamma_1, \gamma_2, \gamma_3)$
defined in this way is called
the {\em Gabrielov number} of the singularity.
\begin{figure}
\centering
\ifx\JPicScale\undefined\def\JPicScale{1}\fi
\psset{unit=\JPicScale mm}
\psset{linewidth=0.3,dotsep=1,hatchwidth=0.3,hatchsep=1.5,shadowsize=1,dimen=middle}
\psset{dotsize=0.7 2.5,dotscale=1 1,fillcolor=black}
\psset{arrowsize=1 2,arrowlength=1,arrowinset=0.25,tbarsize=0.7 5,bracketlength=0.15,rbracketlength=0.15}
\begin{pspicture}(0,0)(111,76)
\rput{0}(10,40){\psellipse[fillstyle=solid](0,0)(1,-1)}
\psline[linestyle=dotted](10,40)(45,40)
\psline(10,40)(35,40)
\psline(40,40)(65,40)
\psline[linestyle=dotted](85,60)(78,53)
\psline(65,40)(80,55)
\psline(85,60)(100,75)
\psline[linestyle=dotted](85,20)(78,27)
\psline(65,40)(80,25)
\psline(85,20)(100,5)
\psline(75,50)(65,55)
\psline(50,40)(65,55)
\psline(75,30)(65,55)
\psline(65,55)(65,70)
\psline[linestyle=dotted](66,40)(66,55)
\psline[linestyle=dotted](64,40)(64,55)
\rput{0}(25,40){\psellipse[fillstyle=solid](0,0)(1,-1)}
\rput{0}(50,40){\psellipse[fillstyle=solid](0,0)(1,-1)}
\rput{0}(65,40){\psellipse[fillstyle=solid](0,0)(1,-1)}
\rput{0}(75,50){\psellipse[fillstyle=solid](0,0)(1,-1)}
\rput{0}(90,65){\psellipse[fillstyle=solid](0,0)(1,-1)}
\rput{0}(65,55){\psellipse[fillstyle=solid](0,0)(1,-1)}
\rput{0}(65,70){\psellipse[fillstyle=solid](0,0)(1,-1)}
\rput{0}(75,30){\psellipse[fillstyle=solid](0,0)(1,-1)}
\rput{0}(100,75){\psellipse[fillstyle=solid](0,0)(1,-1)}
\rput{0}(90,15){\psellipse[fillstyle=solid](0,0)(1,-1)}
\rput{0}(100,5){\psellipse[fillstyle=solid](0,0)(1,-1)}
\rput(65,35){$\alpha_1$}
\rput(50,35){$\alpha_2$}
\rput(10,35){$\alpha_{\gamma_1}$}
\rput(82,33){$\alpha_{\gamma_1+1}$}
\rput(110,9){$\alpha_{\gamma_1+\gamma_2-1}$}
\rput(81,46){$\alpha_{\gamma_1+\gamma_2}$}
\rput(111,70){$\alpha_{\gamma_1+\gamma_2+\gamma_3-2}$}
\rput(53,55){$\alpha_{\gamma_1+\gamma_2+\gamma_3-1}$}
\rput(53,70){$\alpha_{\gamma_1+\gamma_2+\gamma_3}$}
\rput(25,35){$\alpha_{\gamma_1-1}$}
\rput(98,18){$\alpha_{\gamma_1+\gamma_2-2}$}
\rput(102,60){$\alpha_{\gamma_1+\gamma_2+\gamma_3-3}$}
\end{pspicture}
\caption{The diagram $\That(\gamma_1, \gamma_2, \gamma_3)$}
\label{fg:Dynkin-Milnor}
\end{figure}
The {\em strange duality}
discovered by Arnold \cite{Arnold_ICM}
states that
the 14 exceptional unimodal singularities come in pairs
$(f, \fv)$ such that
the Dolgachev number of $f$ is equal
to the Gabrielov number of $\fv$
and vice versa.
Pinkham \cite{Pinkham_strange-duality} and
Dolgachev and Nikulin
\cite{Dolgachev_IQF, Nikulin_ISBF}
gave an interpretation
of the strange duality in terms of algebraic cycles
and transcendental cycles of K3 surfaces.

Let
$
 g(x,y,z,w) \in \bC[x, y, z, w]
$
be a very general weighted homogeneous polynomial
with
$
 \deg(x, y, z, w; g) = (a, b, c, 1; h)
$
and
$
 S = \bC[x,y,z,w]/(g)
$
be the quotient ring.
The Deligne--Mumford stack
$$
 \scY
  = \bProj S
  = [(g^{-1}(0) \setminus \bszero) / \bCx]
$$
is a compactification
of the Milnor fiber of $f$.
Here, the symbol $\bProj$ is used
to indicate that $\bProj S$ is considered
not as a scheme but as a stack.

The {\em stable derived category} of $S$
is defined as the quotient category
$$
 D^b_\sing(\gr S) = D^b(\gr S) / D^\perf(\gr S)
$$
of the bounded derived category
$
 D^b (\gr S)
$
of finitely generated $\bZ$-graded $S$-modules
by the full triangulated subcategory
$
 D^\perf(\gr S)
$
consisting of bounded complexes of projective modules
\cite{Buchweitz_MCM, Happel_GA, Krause_SDCNS, Orlov_DCCSTCS}.
Since $S$ is Gorenstein with parameter zero,
one has an equivalence
$$
 \Psi_S : D^b_\sing(\gr S) \simto D^b \coh \scY
$$
by Orlov \cite[Theorem 2.5]{Orlov_DCCSTCS}.
The stable derived category of $R=S/(w)$ is defined similarly as
$
 D^b_\sing(\gr R) = D^b(\gr R) / D^\perf(\gr R),
$
and studied by Kajiura, Saito and Takahashi
\cite{Kajiura-Saito-Takahashi_3}
and Lenzing and de la Pe\~{n}a
\cite{Lenzing-de_la_Pena_ECA}.
Since $R$ is Gorenstein with parameter $-1$,
one has a fully faithful functor
$$
 \Psi_R : D^b \coh \scY_\infty \to D^b_\sing(\gr R)
$$
and a semiorthogonal decomposition
$$
 D^b_\sing(\gr R) = \la \Psi_R(D^b \coh \scY_\infty), \, R/\frakm_R \ra
$$
by a result of Orlov \cite{Orlov_DCCSTCS},
where $R / \frakm_R$ is the residue field
by the maximal ideal $\frakm_R = (x, y, z)$
of the origin and
$
 \scY_\infty := \bProj R
$
is the divisor at infinity.

Let
$
 \Phi_{\gr} : \gr R \to \gr S
$
be the functor
sending an $R$-module
to the same module
considered as an $S$-module by the natural projection
$
 \varphi : S \to R.
$
Since $R$ is perfect as an $S$-module,
the functor
$
 \Phi_{\gr} 
$
sends a perfect complex of $R$-modules
to a perfect complex of $S$-modules
and induces the {\em push-forward functor}
$$
 \Phi_\sing : D^b_\sing(\gr R) \to D^b_\sing(\gr S)
$$
studied in \cite{Dyckerhoff-Murfet_PFMF,
Polishchuk-Vaintrob_MFSCS}.
Let further
$
 \iota : \scY_\infty \hookrightarrow \scY
$
be the inclusion.

\begin{theorem} \label{th:functor}
The composite functor
$$
 \Psi_S \circ \Phi_\sing \circ \Psi_R
  : D^b \coh \scY_\infty \to D^b \coh \scY
$$
is isomorphic to the push-forward functor
$$
 \iota_* : D^b \coh \scY_\infty \to D^b \coh \scY,
$$
and the image of the residue field $R/\frakm_R$ in $D^b_\sing(\gr R)$
by $\Psi_S \circ \Phi_\sing$ is isomorphic to the structure sheaf
$\scO_\scY[2]$
shifted by $2$.
\end{theorem}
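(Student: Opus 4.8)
The plan is to compare the two functors $\Psi_S \circ \Phi_\sing \circ \Psi_R$ and $\iota_*$ by making explicit the equivalences of Orlov on both sides of the map $\varphi : S \to R$, and tracking a generating object through the whole chain. First I would recall that Orlov's construction realises $D^b\coh\scY$ (resp.\ $D^b\coh\scY_\infty$) inside $D^b_\sing(\gr S)$ (resp.\ $D^b_\sing(\gr R)$) by means of the acyclic-twist functor: the equivalence $\Psi_S$ sends a graded maximal Cohen--Macaulay $S$-module, viewed as a matrix factorisation of $g$, to the cokernel sheaf on $\scY$, and similarly $\Psi_R$ sends a matrix factorisation of $f$ to a sheaf on $\scY_\infty$. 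The functor $\Phi_\sing$ is, on the level of matrix factorisations, the standard ``Knörrer/push-forward'' operation that takes a matrix factorisation $(A, B)$ of $f$ over $\bC[x,y,z]$ to the matrix factorisation
$$
\begin{pmatrix} A & -w \\ 0 & B \end{pmatrix},
\qquad
\begin{pmatrix} B & w \\ 0 & A \end{pmatrix}
$$
of $g = f + w\,(\text{stuff})$ over $\bC[x,y,z,w]$, as in \cite{Dyckerhoff-Murfet_PFMF, Polishchuk-Vaintrob_MFSCS}; here I would use that $g$ restricted to $w=0$ is $f$, so the relevant completed/graded matrix-factorisation categories match up. The key point is then that this matrix-factorisation push-forward, after applying the cokernel functors on both sides, computes precisely the derived direct image along the hypersurface inclusion $\{w=0\}\hookrightarrow\scY$, which is exactly $\iota_*$ since $\scY_\infty = \bProj R$ is the divisor $\{w=0\}$ in $\scY = \bProj S$.

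For the precise identification I would proceed in two steps. Step one: show the natural transformation. Rather than chase arbitrary objects, I would exhibit a natural isomorphism of functors by checking it on a generating set — for instance on the twisted structure sheaves $\scO_{\scY_\infty}(n)$ for $n$ in a suitable range (a full exceptional collection or tilting object exists on these weighted-projective-line-type stacks by Geigle--Lenzing). Under $\Psi_R$ each $\scO_{\scY_\infty}(n)$ corresponds to an explicit MCM $R$-module (a graded free module, or a syzygy thereof, since $R$ is the cone over $\scY_\infty$); applying $\Phi_\sing$ and then $\Psi_S$ I would compute the resulting sheaf on $\scY$ and match it with $\iota_*\scO_{\scY_\infty}(n)$, using that $\iota_*$ is exact and sends $\scO_{\scY_\infty}(n)$ to $\scO_{\scY_\infty}(n)$ regarded as a sheaf on $\scY$. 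Compatibility with morphisms follows because $\Phi_\gr$ is literally restriction of scalars along $\varphi$, hence strictly compatible with the grading shift and with the obvious maps, and Orlov's equivalences are compatible with the grading shift by construction.

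Step two: identify the image of the residue field. The residue field $R/\frakm_R$ is the cokernel of the Koszul-type matrix factorisation of $f$ built from the regular sequence $x,y,z$ in $\bC[x,y,z]$ (with $f$ expressed via the Euler relation $h\,f = a x f_x + b y f_y + c z f_z$, so that $R/\frakm_R$ is indeed an MCM module over $R$ with a $2$-periodic resolution after stabilisation). Pushing this forward via the block construction above gives a matrix factorisation of $g$ over $\bC[x,y,z,w]$ whose cokernel, as a sheaf on $\scY$, I expect to be $\scO_\scY$ up to a shift; the shift is forced to be $[2]$ by a dimension/degree bookkeeping — the origin of the affine cone has codimension three in the total space but two in $\scY$ after the $\bCx$-quotient, and the standard normalisation in Orlov's theorem introduces exactly the shift recorded in his semiorthogonal decomposition $D^b_\sing(\gr R) = \la \Psi_R(D^b\coh\scY_\infty), R/\frakm_R\ra$. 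Concretely I would compute $\Phi_\sing(R/\frakm_R)$ as the structure sheaf of the point-quotient inside $\scY$, then observe that this point is a complete intersection cut out by $x,y,z$ in $\scY$ which, because $g$ was chosen very general, forces $w$ to be the only remaining coordinate and hence the locus is empty as a substack — so after stabilisation the matrix factorisation becomes, up to shift, the trivial (rank-one) one, giving $\scO_\scY[2]$.

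The main obstacle will be Step one's bookkeeping of grading shifts and the precise normalisation of Orlov's equivalences: $\Psi_S$ and $\Psi_R$ are only canonical up to the choice of cutoff in Orlov's construction, and $S$, $R$ have different Gorenstein parameters ($0$ versus $-1$), so the comparison of $\Phi_\sing$ with $\iota_*$ is sensitive to how one matches the truncation functors on the two sides. I would handle this by fixing once and for all the versions of Orlov's theorem with the parameters as in the excerpt and verifying the shift on the single object $\scO_{\scY_\infty}$, after which functoriality and the triangulated structure propagate the isomorphism; the residue-field computation then pins down the remaining ambiguity and yields the stated shift by $2$.
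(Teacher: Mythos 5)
Your route through matrix factorisations is genuinely different from the paper's, which never leaves Orlov's framework of semiorthogonal decompositions: the paper proves a more general statement (Theorem \ref{th:functor2}, for any graded connected Gorenstein $S$ with parameter $0$ and $R=S/(w)$ with $w$ of degree one) by realising $D^b(\qgr R)$ and $D^b(\qgr S)$ as the explicit subcategories $\scD_0^R \subset D^b(\gr R_{\ge 0})$ and $\scD_0^S \subset D^b(\gr S_{\ge 0})$, and observing (Lemma \ref{lm:orthogonal}) that restriction of scalars $\Phi_{\gr}$ carries $\scD_0^R$ directly into $\scD_0^S=\scT_0^S$; hence no correction by adjoint truncation functors is needed, and the identification with $\Phi_{\qgr}$ (hence with $\iota_*$) follows from the uniqueness of the decomposition triangles, which is functorial for free. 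This sidesteps the main gap in your Step one: exhibiting an isomorphism of two exact functors on the objects of a generating (even tilting) collection does not by itself yield an isomorphism of functors on the whole triangulated category. To make your argument rigorous you would have to upgrade the comparison to the dg level (identify both functors with the same bimodule over the relevant endomorphism dg algebras), which is precisely the work the paper's orthogonality lemma renders unnecessary. (A small further slip: a graded free $R$-module is perfect, hence zero in $D^b_\sing(\gr R)$, so it cannot be the image of $\scO_{\scY_\infty}(n)$ under $\Psi_R^{-1}$; only its syzygies are relevant.)

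Your Step two contains a genuine error. The image of $R/\frakm_R$ under $\Phi_\sing$ is $S/\frakm_S$, which is supported at the cone point of $\Spec S$ --- the unique singular point --- and is therefore a \emph{nonzero} object of $D^b_\sing(\gr S)$; the heuristic ``the locus is empty as a substack, so after stabilisation the matrix factorisation becomes trivial'' would prove that the image is $0$, not $\scO_\scY[2]$. The mechanism the paper uses is Gorenstein duality with parameter zero: $\RHom_S(S/\frakm_S, S(i))$ equals $k[-\dim S]$ for $i=0$ and vanishes otherwise, so the cone $N=\Cone(S/\frakm_S[-1]\to S[\dim S-1])$ lies in $\scT_0^S=\scD_0^S$, is isomorphic to $S/\frakm_S$ in $D^b_\sing(\gr S)$ (the free term is perfect and dies there), and is isomorphic to $\scO[\dim S-1]$ in $D^b(\qgr S)$ (the torsion term dies there). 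With $\dim S=3$ this yields $\scO_\scY[2]$. Your dimension bookkeeping happens to land on the correct shift, but the supporting argument does not hold up and needs to be replaced by this triangle computation or an equivalent one.
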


Let $Y$ be the minimal resolution
of the coarse moduli space of $\scY$.
The McKay correspondence as a derived equivalence
\cite{Kapranov-Vasserot, Bridgeland-King-Reid}
gives
\begin{equation} \label{eq:YvsX}
 \Upsilon : D^b \coh \scY \simto D^b \coh Y.
\end{equation}
Recall that the {\em numerical Grothendieck group}
$\scN(Y)$
is the quotient of the Grothendieck group $K(Y)$
of $Y$ by the radical of the Euler form
$$
 \chi \lb [\scE], [\scF] \rb
  = \sum_i (-1)^i \dim \Ext^i(\scE, \scF).
$$
The integral cohomology ring
$$
 H^\bullet (Y, \bZ)
  = H^0(Y, \bZ) \oplus H^2(Y, \bZ) \oplus H^4(Y, \bZ)
$$
equipped with the {\em Mukai pairing}
$$
 ((a_0, a_2, a_4), \ (b_0, b_2, b_4))
  = (a_2, b_2) - (a_0, b_4) - (a_4, b_0)
$$
is called the {\em Mukai lattice}.
For a coherent sheaf $\scE$,
its {\em Mukai vector} is defined by
$$
 v(\scE) = \ch(\scE) \sqrt{\td(Y)}.
$$
Riemann--Roch theorem states that
$$
 \chi(\scE, \scF) = - (v(\scE), v(\scF)),
$$
so that
$\scN(Y)$ can be identified with the image of $K(Y)$
in the Mukai lattice $H^\bullet(Y, \bZ)$
under the map
$
 v : K(Y) \to H^\bullet(Y, \bZ).
$

\begin{corollary} \label{cr:spherical}
There is a full exceptional collection
$(\scS_\alpha)_{\alpha=0}^{\delta_1+\delta_2+\delta_3-1}$
in $D^b_\sing(\gr R)$
the images
$
 \scE_\alpha = \Upsilon \circ \Psi_S \circ \Phi_\sing (\scS_\alpha)
$
of which satisfy the following:
\begin{itemize}
 \item
The endomorphism dg algebra of
$
 \bigoplus_{\alpha=1}^{\delta_1+\delta_2+\delta_3-1}
  \scE_\alpha
$
is the trivial extension
of the endomorphism dg algebra of
$
 \bigoplus_{\alpha=1}^{\delta_1+\delta_2+\delta_3-1}
  \scS_\alpha.
$
 \item
The sequence
$
 (\scE_\alpha)_{\alpha=0}^{\delta_1+\delta_2+\delta_3-1}
$
is a spherical collection.
 \item
The Coxeter--Dynkin diagram of the spherical collection
is $\That(\delta_1, \delta_2, \delta_3)$.
 \item
The spherical collection
is a basis of the numerical Grothendieck group $\scN(Y)$.
 \item
The spherical collection
split-generates $D^b \coh Y$.
\end{itemize}
\end{corollary}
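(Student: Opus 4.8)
The plan is to build the exceptional collection inside $D^b_\sing(\gr R)$ and transport it to $Y$ along $\Upsilon\circ\Psi_S\circ\Phi_\sing$, using Theorem \ref{th:functor} to identify the images. I would set $\scS_0 = R/\frakm_R$ and, for $\alpha\ge1$, $\scS_\alpha = \Psi_R(\scF_\alpha)$, where $(\scF_\alpha)_{\alpha=1}^{\delta_1+\delta_2+\delta_3-1}$ is the Geigle--Lenzing full exceptional collection of line bundles on the weighted projective line $\scY_\infty$ of type $(\delta_1,\delta_2,\delta_3)$ \cite{Geigle-Lenzing_WPC}, ordered so that the full collection $(\scS_\alpha)_{\alpha=0}^{\delta_1+\delta_2+\delta_3-1}$ is the one of \cite{Kajiura-Saito-Takahashi_3, Lenzing-de_la_Pena_ECA} whose Coxeter--Dynkin diagram is $\That(\delta_1,\delta_2,\delta_3)$. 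By Orlov's semiorthogonal decomposition $D^b_\sing(\gr R) = \la\Psi_R(D^b\coh\scY_\infty),\, R/\frakm_R\ra$ \cite{Orlov_DCCSTCS} this is a full exceptional collection, and since $\Psi_R$ is fully faithful, $\mathrm{REnd}(\bigoplus_{\alpha=1}^{\delta_1+\delta_2+\delta_3-1}\scS_\alpha)$ is the endomorphism dg algebra $A := \mathrm{REnd}_{\scY_\infty}(\bigoplus_\alpha\scF_\alpha)$ of the Geigle--Lenzing tilting bundle.

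Applying $\Upsilon\circ\Psi_S\circ\Phi_\sing$ and Theorem \ref{th:functor}, one gets $\scE_0 = \Upsilon(\scO_\scY[2])$ and $\scE_\alpha = \Upsilon(\iota_*\scF_\alpha)$ for $\alpha\ge1$, so the task is to compute the effect of $\iota_*$ on morphisms. As $\deg(x,y,z,w;g)$ sums to $h$, the stack $\scY$ is a Calabi--Yau surface, $\omega_\scY\cong\scO_\scY$, and $\scY_\infty$ is the effective Cartier divisor $\{w=0\}$; Grothendieck duality gives $\iota^!\scO_\scY\cong\omega_{\scY_\infty}[-1]$, and adjunction on $\scY$ identifies $\omega_{\scY_\infty}$ with $N_{\scY_\infty/\scY}$. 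Feeding the Koszul resolution of $\iota_*\scO_{\scY_\infty}$ into these isomorphisms yields, functorially in $E,F\in D^b\coh\scY_\infty$ and compatibly with composition,
$$
 \mathrm{RHom}_\scY(\iota_*E,\iota_*F)\;\cong\;\mathrm{RHom}_{\scY_\infty}(E,F)\,\oplus\,\mathrm{RHom}_{\scY_\infty}(F,E)^\vee[-2].
$$
Applied to the $\scF_\alpha$ this says that $\mathrm{REnd}_Y(\bigoplus_{\alpha=1}^{\delta_1+\delta_2+\delta_3-1}\scE_\alpha)$ is the trivial extension of $A$ by the bimodule $A^\vee[-2]$, which is the first bullet. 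Sphericality follows at once: for $\alpha\ge1$ the object $\scF_\alpha$ is exceptional on the curve $\scY_\infty$, so $\mathrm{REnd}(\scE_\alpha)\cong\bC\oplus\bC[-2]$, while $\scE_0$ is spherical because $H^\bullet(\scY,\scO_\scY)\cong\bC\oplus\bC[-2]$; since $\omega_Y$ is trivial these are $2$-spherical objects of $D^b\coh Y$.

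For the third bullet, $Y$ being a Calabi--Yau surface makes its Euler form $\chi$ symmetric, and the displayed isomorphism gives $\chi(\scE_\alpha,\scE_\beta)=\chi(\scF_\alpha,\scF_\beta)+\chi(\scF_\beta,\scF_\alpha)$ for $\alpha,\beta\ge1$, while the entries involving $\scE_0$ come from $\chi(\scE_0,\scE_\alpha)=\chi(\scY_\infty,\scF_\alpha)$ (using $\iota^*\scO_\scY\cong\scO_{\scY_\infty}$) together with the semiorthogonality $\mathrm{RHom}(R/\frakm_R,\Psi_R(-))=0$. Thus the intersection matrix of $(\scE_\alpha)$ is the symmetrization of the unitriangular Cartan matrix of $(\scS_\alpha)$; one checks this is the lattice $\That(\delta_1,\delta_2,\delta_3)$, i.e.\ that passing to the trivial extension does not change the underlying Coxeter--Dynkin graph. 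For the fourth bullet, the $\delta_1+\delta_2+\delta_3$ classes $[\scE_\alpha]$ are linearly independent by non-degeneracy of this matrix, their number equals $\mathrm{rk}\,\scN(Y)$ by the description of the Picard lattice of these K3 surfaces in \cite{Pinkham_strange-duality, Dolgachev_IQF, Nikulin_ISBF}, and they span $\scN(Y)$ rather than a proper finite-index sublattice because the discriminant of the intersection matrix matches that of the Mukai pairing on $\scN(Y)$, which is the comparison with \cite{Ebeling-Ploog_MCPS}. For the fifth bullet, $(\scS_\alpha)$ thick-generates $D^b_\sing(\gr R)$, hence $(\scE_\alpha)$ thick-generates the essential image of $\Upsilon\circ\Psi_S\circ\Phi_\sing$; by Theorem \ref{th:functor} that image contains $\Upsilon(\scO_\scY)$ and all $\Upsilon(\iota_*F)$ with $F\in D^b\coh\scY_\infty$, and since $\scY_\infty=\{w=0\}$ is ample the exact sequences $0\to\scO_\scY(-n-1)\to\scO_\scY(-n)\to\iota_*\scO_{\scY_\infty}(-n)\to0$ place the whole ample sequence $\{\scO_\scY(-n)\}_{n\ge0}$, and therefore all of $D^b\coh\scY\simeq D^b\coh Y$, in the thick subcategory generated by the $\scE_\alpha$.

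The hard part will be the combinatorial content of the third and fourth bullets: checking that the symmetrized Cartan form of the normalized collection $(\scS_\alpha)$ is exactly $\That(\delta_1,\delta_2,\delta_3)$, and that the resulting lattice of classes is saturated in $\scN(Y)$. The identity $\mathrm{RHom}_\scY(\iota_*E,\iota_*F)\cong\mathrm{RHom}_{\scY_\infty}(E,F)\oplus\mathrm{RHom}_{\scY_\infty}(F,E)^\vee[-2]$ and its compatibility with the algebra structure also require some care in the stacky setting, but are formal; everything else reduces to Theorem \ref{th:functor}, Orlov's decomposition, and bookkeeping against \cite{Ebeling-Ploog_MCPS}.
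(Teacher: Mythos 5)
Your overall route is the same as the paper's: place the exceptional collection inside $D^b_\sing(\gr R)=\la \Psi_R(D^b\coh\scY_\infty),\, R/\frakm_R\ra$, use Theorem \ref{th:functor} to identify its image under $\Upsilon\circ\Psi_S\circ\Phi_\sing$ with $\scO_\scY$ (up to shift) together with pushforwards $\iota_*$ from the divisor at infinity, and then verify the bullets by computing $\Hom$'s and Euler forms. Your arguments for sphericality, for split-generation (the ample-sequence resolution, which is the paper's Lemma \ref{lem:split-generate}), and for the basis of $\scN(Y)$ are essentially the paper's, except that the paper spans $\scN(Y)$ directly by exhibiting $[\scO_Y]$, $\NS(Y)$ and $[\scO_p]$ inside the subcategory generated by the collection, rather than by your discriminant comparison (which would also work but needs the lattice-theoretic input you only gesture at).

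The genuine gap is in the first bullet. You claim that
$$
 \RHom_\scY(\iota_*E,\iota_*F)\;\cong\;\RHom_{\scY_\infty}(E,F)\oplus\RHom_{\scY_\infty}(F,E)^\vee[-2]
$$
holds \emph{compatibly with composition} and that this is formal from the Koszul resolution and Grothendieck duality. The additive statement is indeed formal for sheaves on a curve in a surface (the triangle $E\otimes\scN^\vee[1]\to L\iota^*\iota_*E\to E$ splits because the curve has homological dimension one), and that is all the Euler-form bullets need. But the multiplicative statement --- that the endomorphism dg algebra of $\bigoplus_\alpha\iota_*\scS_\alpha$ is the \emph{trivial} extension rather than a nontrivial deformation of it --- depends on the formal neighborhood of $\scY_\infty$ in $\scY$ being split, i.e.\ isomorphic to the formal neighborhood of the zero section in the total space $\scK$ of the normal (= canonical) bundle. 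This is exactly the non-formal geometric input the paper supplies: it checks $H^1(\scT_\bX\otimes(\scN_{\bX/\scK}^\vee)^\nu)=H^1((\scN_{\bX/\scK}^\vee)^\nu)=0$ for all $\nu\ge 1$ (both sheaves being $\scO_\bX(-\vecomega)$), and only then invokes the Segal--Ballard theorem, which is a statement about the zero section of a line bundle, not about an arbitrary divisor. For a general divisor in a K3 surface (e.g.\ a higher-genus curve) the splitting fails, so your step cannot be purely formal. A secondary inaccuracy: the collection on $\scY_\infty$ whose pushforward has diagram $\That(\delta_1,\delta_2,\delta_3)$ is not the Geigle--Lenzing collection of line bundles but the right dual of the Lenzing--de la Pe\~na collection, which contains the torsion sheaves $S_i^{(j)}$; no reordering of the line-bundle collection works, since e.g.\ $\chi(\scO,\scO(\vecc))=2$ symmetrizes to an off-diagonal entry that does not occur in $\That(\delta_1,\delta_2,\delta_3)$ at that position of the diagram.
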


Recall that an object $\scE$ is said to be {\em spherical}
if $\Hom^i(\scE, \scE)$ is isomorphic to $\bC$
for $i = 0, 2$ and zero otherwise
\cite[Definition 1.1]{Seidel-Thomas}.
A sequence of objects is called a {\em spherical collection}
if each object is spherical.
The definition of the {\em trivial extension}
can be found in \cite{Seidel_suspension},
which is called the {\em cyclic completion}
in \cite{Segal_ADT}.
The endomorphism dg algebra of
$
 \bigoplus_{\alpha=0}^{\delta_1+\delta_2+\delta_3-1}
  \scE_\alpha
$
is not the trivial extension
of the endomorphism dg algebra of
$
 \bigoplus_{\alpha=0}^{\delta_1+\delta_2+\delta_3-1}
  \scS_\alpha;
$
otherwise,
the derived category $D^b \coh Y$ will not depend
on the defining equation of $Y$.
The spherical collection
$
 (\scE_\alpha)_{\alpha=0}^{\delta_1+\delta_2+\delta_3-1}
$
has the same properties as the collection
given by Ebeling and Ploog
\cite{Ebeling-Ploog_MCPS}.


Let $(Y, \Yv)$ be a pair of K3 surfaces
obtained as compactifications of Milnor fibers
of a dual pair of exceptional unimodal singularities.
According to \cite[Theorem 4.3.9]{Kobayashi_DW},
such a pair can be realized
as smooth anticanonical hypersurfaces
in a pair $(X, \Xv)$
of toric weak Fano manifolds
associated with a polar dual pair
$(\Delta, \Delta^*)$ of reflexive polytopes.

The {\em A-model VHS}
$
 (\HAZ, \nabla^A, \scrF_A^\bullet, Q_A)
$
associated with $Y$
is an integral variation of pure and polarized Hodge structures
of weight 2
in a neighborhood
$$
 U = \lc \beta + \sqrt{-1} \omega \in \NS(Y) \otimes \bC
       \bigm| \la \omega, d \ra \gg 0
           \text{ for any non-zero $d \in \Eff(Y)$}
     \rc
$$
of the large radius limit
in the complexified K\"{a}hler moduli space of $Y$.
Here
$\NS(Y) \subset \scN(Y)$ is the N\'{e}ron--Severi group of $Y$,
$\Eff(Y) \subset \scN(Y)$ is the semigroup of effective curves on $Y$,
$\HAZ$ is
the trivial local system on $U$
with fiber $\scN(Y)$ and
$\nabla^A = d$ is the associated trivial flat connection
on $\HA = \HAZ \otimes \scO_U$.
The polarization $Q_A$ is given by the Mukai pairing,
and the Hodge filtration is such that
$$
 \mho
  = \exp(\beta + \sqrt{-1} \omega)
  = \lb 1, \ (\beta + \sqrt{-1} \omega), \ 
      \frac{1}{2} (\beta + \sqrt{-1} \omega)^2 \rb
$$
spans the $(2, 0)$-part of $\HAC = \HAZ \otimes \bC$.

The K3 surface $\Yv$ comes in a family $\varphiv : \frakYv \to \scMv$
where $\scMv$ is an algebraic torus of the same dimension as $U$.
Let $\Uv'$ be a neighborhood of the {\em large complex structure limit}
in $\scMv$ and $\Uv$ be its universal cover.
The local system 
$
 R^2 \varphiv_! \, \bZ_\frakYv
$
carries an integral variation of polarized mixed Hodge structures,
and the {\em B-model VHS}
$
 \HBZ
$
is defined as the pull-back to $\Uv$
of the graded subquotient
$
 \gr_2^W \! R^2 \varphiv_! \, \bZ_\frakYv
$
of weight $2$.
There is a biholomorphic map
$
 \varsigma : \Uv \to U
$
called the {\em mirror map}.
Iritani has introduced
certain subsystems
$
 \HAZ^\amb \subset \HAZ
$
and
$
 \HBZ^\vc \subset \HBZ
$
and given an isomorphism
\begin{equation} \label{eq:ambient_isom}
 \Mir_\scY : \varsigma^*
  (\HAZ^\amb, \nabla^A, \scrF_A^\bullet, Q_A)
  \simto (\HBZ^\vc, \nabla^B, \scrF_B^\bullet, Q_B)
\end{equation}
of integral variations of pure and polarized Hodge structures
\cite[Theorem 6.9]{Iritani_QCP}.

\begin{corollary} \label{cr:vhs}
One has
$
 \HAZ^\amb = \HAZ
$
and
$
 \HBZ^\vc = \HBZ,
$
so that the isomorphism \eqref{eq:ambient_isom} gives
an isomorphism
$$
 \Mir_\scY : \varsigma^*
  (\HAZ, \nabla^A, \scrF_A^\bullet, Q_A)
  \simto (\HBZ, \nabla^B, \scrF_B^\bullet, Q_B).
$$
of integral variations of pure and polarized Hodge structures.
\end{corollary}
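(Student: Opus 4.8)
The plan is to show that the two subsystems occurring in Iritani's isomorphism \eqref{eq:ambient_isom} already coincide with the full systems, i.e.\ $\HAZ^\amb = \HAZ$ and $\HBZ^\vc = \HBZ$; the corollary then follows by reading \eqref{eq:ambient_isom} as an isomorphism of the full integral variations.

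I would treat the A-side first. The fiber of $\HAZ^\amb$ is the ambient part of the cohomology of $Y$, that is, the image of the restriction map from $X$, regarded as a sublattice of $\scN(Y)$ through the Mukai vector; so it suffices to prove that this sublattice is all of $\scN(Y)$. Here the spherical collection of Corollary~\ref{cr:spherical} does the work: its Mukai vectors $v(\scE_\alpha)$, $0 \le \alpha \le \delta_1+\delta_2+\delta_3-1$, already form a $\bZ$-basis of $\scN(Y)$, so it is enough to check that each $v(\scE_\alpha)$ is ambient. For $\alpha = 0$ we have $\scS_0 = R/\frakm_R$, and Theorem~\ref{th:functor} identifies its image under $\Psi_S \circ \Phi_\sing$ with $\scO_\scY[2]$; since the McKay equivalence \eqref{eq:YvsX} sends $\scO_\scY$ to $\scO_Y$, we get $v(\scE_0) = v(\scO_Y)$, which is the class of the restriction of $\scO_X$ and hence ambient. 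For $\alpha \ge 1$ the object $\scS_\alpha$ lies in $\Psi_R(D^b \coh \scY_\infty)$, so by Theorem~\ref{th:functor} we have $\Psi_S \circ \Phi_\sing(\scS_\alpha) \cong \iota_* \scF_\alpha$ for some $\scF_\alpha \in D^b \coh \scY_\infty$, and therefore $\scE_\alpha = \Upsilon(\iota_* \scF_\alpha)$; since the Fourier--Mukai kernel of \eqref{eq:YvsX} is supported over the coarse moduli space, $\scE_\alpha$ is supported on the curve $C \subset Y$ obtained as the total transform of the image of $\scY_\infty$. Hence $v(\scE_\alpha)$ has vanishing rank and its degree-two part lies in the span of the components of $C$. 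The decisive input is the toric realization of \cite[Theorem 4.3.9]{Kobayashi_DW}: the divisor $\scY_\infty$ at infinity and the exceptional curves of the resolution are cut out on $Y$ by the toric boundary divisors of $X$, so every component of $C$ has ambient class. Thus all $v(\scE_\alpha)$ are ambient, the ambient sublattice equals $\scN(Y)$, and $\HAZ^\amb = \HAZ$.

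For the B-side I would argue by rank and saturation. Iritani's isomorphism \eqref{eq:ambient_isom} gives $\operatorname{rank} \HBZ^\vc = \operatorname{rank} \HAZ = \delta_1+\delta_2+\delta_3$, and one checks that $\HBZ = \gr_2^W R^2 \varphiv_! \, \bZ_{\frakYv}$ has the same rank --- a combinatorial computation on the mirror family carried out from the polar dual polytope $\Delta^*$, dual to the A-side count just made. Hence $\HBZ^\vc \subseteq \HBZ$ is a full-rank sub-variation of pure polarized Hodge structure, and it remains only to see that it is saturated in $\HBZ$, which then forces $\HBZ^\vc = \HBZ$. Feeding the two equalities into \eqref{eq:ambient_isom} yields the stated isomorphism.

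The main obstacle is the A-side identification of the ambient sublattice with $\scN(Y)$. Corollary~\ref{cr:spherical} already furnishes a $\bZ$-basis of $\scN(Y)$, but one must still verify, via the explicit polytope model of \cite{Kobayashi_DW}, that the curve at infinity together with the exceptional curves of the resolution accounts for every divisor class on $Y$ --- equivalently, that $\scN(Y)$ carries no ``non-toric'' class --- and one must pin down the compatibility of the McKay equivalence $\Upsilon$ with supports that is used to place the $\scE_\alpha$ on $C$. On the B-side the delicate step is the saturation of $\HBZ^\vc$ in $\HBZ$.
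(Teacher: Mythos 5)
Your overall strategy (prove the two equalities of lattices and feed them into Iritani's theorem) matches the paper, but both halves of your argument have genuine gaps. On the A-side, the crux is not whether the divisor classes at infinity are ambient (they are, since $\NS(Y)=\iota^*\NS(X)$ by the choice of $X$ in \cite[Theorem 4.3.9]{Kobayashi_DW}); it is whether the class of a point is ambient. Your step ``$\scE_\alpha$ is supported on $C$, its degree-two part lies in the span of components of $C$, hence $v(\scE_\alpha)$ is ambient'' is a non sequitur: the Mukai vector of a sheaf supported on a curve has a degree-four component (e.g.\ $v(\scO_E)=(0,[E],1)$ for a $(-2)$-curve $E$), and the ambience of $[\mathrm{pt}]\in H^4(Y;\bZ)$ is exactly the point that can fail --- the paper's own quartic example shows that only $4[\mathrm{pt}]$ is ambient there, even though the hyperplane class is. The paper closes this by exhibiting two divisors $D,E$ on $Y$, pulled back from $X$, with $D\cdot E$ a single point, so that $[\mathrm{pt}]=\iota^*(\Dtilde\cdot\Etilde)$; your proposal never produces such an intersection and so never rules out the quartic-type failure.

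On the B-side you correctly reduce to showing that the finite-index inclusion $\HBZ^\vc\subseteq\HBZ$ is an equality, but you then defer ``saturation'' as an unproved delicate step --- and that step is the actual content. The paper's mechanism is a discriminant comparison: the fiber of $\HBZ$ is identified (via the long exact sequence of the pair $(\Yv,D)$) with the transcendental lattice of $\Yv$, hence with $\That(\bsgammav)$, while $\HBZ^\vc\cong\HAZ^\amb=\HAZ\cong\That(\bsdelta)$ by Iritani's theorem and Proposition \ref{prop:NY}; strange duality gives $\bsgammav=\bsdelta$, so the sublattice and the ambient lattice have equal Gram determinants and the index must be one. Without this (or some substitute for saturation), your rank count only gives a finite-index sublattice, which is not enough for an isomorphism of \emph{integral} variations of Hodge structures. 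Note also that this is where strange duality genuinely enters the proof, so omitting it loses the essential input.
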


The equalities
$
 \HAZ^\amb = \HAZ
$
and
$
 \HBZ^\vc = \HBZ
$
fail in general
for a K3 hypersurface in a smooth toric weak Fano variety.
A typical counter-example is the case
when $Y$ is the quartic surface in the projective space,
where the class of a point
does not belong to $\HAZ^\amb$
and only four times the class of a point does
(cf. \cite[Section 6.6]{Iritani_QCP}).
Hodge-theoretic mirror symmetry for the quartic surface
is studied in detail by Hartmann \cite{Hartmann_PMQK}.

The organization of this paper is as follows:
We prove Theorem \ref{th:functor2}
in Section \ref{sc:push-forward},
which is slightly more general
than Theorem \ref{th:functor}.
In Section \ref{sc:spherical},
we use an exceptional collection
given by Lenzing and de la Pen\~{a}
\cite{Lenzing-de_la_Pena_ECA}
to prove Corollary \ref{cr:spherical}.
Variations of Hodge structures
is discussed in Section \ref{sc:vhs}.

{\bf Acknowledgment}:
We thank Hiroshi Iritani for very helpful discussions.
We also thank the anonymous referee
for pointing out an error in the earlier version and
suggesting a number of improvements.
M.~K. is supported by Grant-in-Aid for Scientific Research
(No.21540045).
K.~U. is supported by Grant-in-Aid for Young Scientists
(No.20740037).

\section{Push-forward in stable derived categories}
 \label{sc:push-forward}

Let $k$ be a field and
$A = \bigoplus_{i \ge 0} A_i$
be a Noetherian graded $k$-algebra.
We assume that $A$ is {\em connected}
in the sense that $A_0 = k$, and
write the maximal ideal as
$\frakm_A = \bigoplus_{i \ge 1} A_i$.
The graded ring $A$ is said to be {\em Gorenstein}
if $A$ has a finite injective dimension $n$ and
$$
 \RHom_A(k, A) = k(a)[-n]
$$
for some integer $a$,
which is called the Gorenstein parameter of $A$.
If $A = k[x_1, \ldots, x_n] / (f)$
for $\deg (x_1, \ldots, x_n; f) = (a_1, \ldots, a_n; h)$,
then $A$ is Gorenstein
with parameter $a = a_1 + \cdots + a_n - h$.

Let $\gr A$ be the abelian category of
finitely generated $\bZ$-graded right $A$-modules, and
$\tor A$ be the full subcategory
consisting of graded modules
which are finite-dimensional over $k$.
The quotient category $\gr A / \tor A$
will be denoted by $\qgr A$,
which is equivalent to the abelian category
of coherent sheaves on the quotient stack
$\bProj A = [(\Spec A \setminus \bszero) / \Gm]$
by Serre's theorem
\cite[Proposition 2.16]{Orlov_DCCSTCS}.

Let $D^b(\gr A)$ be the bounded derived category
of $\gr A$.
An object of $D^b(\gr A)$ is said to be {\em perfect}
if it is quasi-isomorphic to a bounded complex
of projective modules.
The full subcategory of $D^b(\gr A)$
consisting of perfect complexes will be denoted
by $D^\perf(\gr A)$.
The quotient category
$$
 D^b_\sing(\gr A) = D^b(\gr A) / D^\perf(\gr A)
$$
is called the bounded {\em stable derived category} of $\gr A$
\cite{Buchweitz_MCM, Happel_GA, Krause_SDCNS, Orlov_TCS}.

Let $\scD$ be a $k$-linear triangulated category and
$\scN \subset \scD$ be a full triangulated subcategory.
The {\em right orthogonal} to $\scN$ is the full subcategory
$\scN^\perp \subset \scD$
consisting of objects $M$ satisfying
$\Hom(N, M) = 0$ for any $N \in \scN$.
The {\em left orthogonal} $\! \,^{\perp} \scN$ is defined similarly.
The subcategory $\scN$ is said to be {\em right admissible}
if the embedding $I : \scN \hookrightarrow \scD$ has
a right adjoint functor $Q : \scD \to \scN$.
Left admissibility is defined similarly
as the existence of a left adjoint functor,
and $\scN$ is said to be {\em admissible}
if it is both right and left admissible.
A subcategory $\scN$ is right admissible
if and only if for any $X \in \scD$,
there exists a distinguished triangle
$
 N \to X \to M \to N[1]
$
with $N \in \scN$ and $M \in \scN^\perp$.
Such a triangle is unique up to isomorphism,
and one has $Q(X) = N$ in this case.
If $\scN$ is right admissible,
then the quotient category $\scD / \scN$ is equivalent to $\scN^\perp$.
Analogous statements also hold for left admissible subcategories.
A sequence $(\scN_1, \ldots, \scN_n)$
of triangulated subcategories
in a triangulated category $\scD$ is called
a {\em weak semiorthogonal decomposition}
if there is a sequence
$
 \scD_1 = \scN_1 \subset \scD_2
  \subset \cdots \subset \scD_n = \scD
$
of left admissible subcategories
such that $\scN_i$ is left orthogonal to $\scD_{i-1}$
in $\scD_i$.
A weak semiorthogonal decomposition will be denoted by
$$
 \scD = \la \scN_1, \ldots, \scN_n \ra.
$$

An object $E$ of $\scD$ is {\em exceptional}
if $\Ext^i(E, E) = 0$ for $i \ne 0$ and
$\Hom(E, E)$ is spanned by the identity morphism.
An {\em exceptional collection} is
a sequence $(E_1, \dots, E_n)$ of exceptional objects
such that $\Ext^i(E_j, E_\ell) = 0$ for any $i$
and any $1 \le \ell < j \le n$.
A full triangulated subcategory
generated by an exceptional collection
is always admissible
\cite[Theorem 3.2]{Bondal_RAACS}.

For an integer $i$,
let $\gr A_{\ge i}$ be the full abelian subcategory of $\gr A$
consisting of graded modules $M$ such that $M_e = 0$ for $e < i$.
Let further $\scS_{\ge i}^A$ and $\scP_{\ge i}^A$
be the full triangulated subcategories of $D^b (\gr A)$
generated by graded torsion modules
$A/\frakm_A(e)$ for $e \le -i$ and
graded free modules $A(e)$ for $e \le -i$
respectively.
By \cite[Lemma 2.4]{Orlov_DCCSTCS},
the subcategories $\scS_{\ge i}^A$ and $\scP_{\ge i}^A$
are right and left admissible, respectively, in $D^b \gr A_{\ge i}$,
and let $\scD_i^A$ and $\scT_i^A$ be their right and left orthogonal
subcategories.
It follows that one has weak semiorthogonal decompositions
\begin{align}
 D^b(\gr A_{\ge i}) = \la \scD_i^A, \scS_{\ge i}^A \ra,
  \label{eq:DS} \\
 D^b(\gr A_{\ge i}) = \la \scP_{\ge i}^A, \scT_i^A \ra,
  \label{eq:PT}
\end{align} 
where $\scD_i^A$ is equivalent to the quotient category
$D^b(\gr A_{\ge i}) / \scS_{\ge i}^A$
which in turn is equivalent to $D^b(\qgr A)$,
and $\scT_i^A$ is equivalent to the quotient category
$D^b(\gr A_{\ge i}) / \scP_{\ge i}^A$
which in turn is equivalent to $D^b_\sing(\gr A)$.
In addition,
one has a semiorthogonal decomposition
\begin{align} \label{eq:TD}
 \scT_0^A = \la A/\frakm_A, A/\frakm_A(-1), \ldots,
  A/\frakm_A(a+1), \scD_{-a}^A \ra
\end{align}
if $a \le 0$
by Orlov \cite[Equation (12)]{Orlov_DCCSTCS}.

The semiorthogonal decomposition \eqref{eq:TD}
can be rephrased as
\begin{align} \label{eq:TD2}
 \scT_0^A = \la \scD_0^A, A/\frakm_A, A/\frakm_A(-1), \ldots,
  A/\frakm_A(a+1) \ra.
\end{align}
Indeed, one has the semiorthogonal decomposition
$$
 D^b(\gr A) = \la \scS_{<i}^A, D^b(\gr A_{\ge i}) \ra
$$
for any $i \in \bZ$
by \cite[Equation (7)]{Orlov_DCCSTCS},
which gives
$$
 D^b(\gr A_{\ge 0}) = \la A / \frakm_A, \dots, A / \frakm_A(a+1),
  D^b(\gr A_{\ge -a}) \ra.
$$
On the other hand,
one has
$$
 D^b(\gr A_{\ge -a}) = \la \scD_{-a}^A, \scS_{\ge -a}^A \ra
$$
by \eqref{eq:DS},
so that
\begin{align} \label{eq:SOD1}
 D^b(\gr A_{\ge 0}) =
  \la A / \frakm_A, \dots, A / \frakm_A(a+1), \scD_{-a}^A, \, \scS_{\ge -a}^A \ra.
\end{align}
By comparing
\begin{align*}
 D^b(\gr A_{\ge 0})
  = \la \scD_0^A, \, \scS_{\ge 0}^A \ra
  = \la \scD_0^A, A / \frakm_A, \dots, A / \frakm_A(a+1),
   \scS_{\ge -a}^A \ra
\end{align*}
with \eqref{eq:TD} and \eqref{eq:SOD1},
one obtains \eqref{eq:TD2}.

Let $\varphi : S \to R$ be a morphism
of graded connected Gorenstein rings,
and $\Phi_{\gr} : \gr R \to \gr S$ be the exact functor
which sends an $R$-module
to the same module considered as an $S$-module via $\varphi$.
The functor $\Phi_{\gr}$ sends finite-dimensional $R$-modules
to finite-dimensional $S$-modules,
and induces an exact functor
$\Phi_{\qgr} : \qgr R \to \qgr S$.

If $R$ has finite projective dimension as an $S$-module,
then $\Phi_{\gr}$ sends perfect complexes of $R$-modules
to perfect complexes of $S$-modules,
and induces a functor
$
 \Phi_\sing : D^b_\sing(\gr R) \to D^b_\sing(\gr S)
$
of stable derived categories.

Now assume that $S$ is a graded connected Gorenstein ring
with parameter $a_S$, and
$w \in S_d$ is a homogeneous element of degree $d > 0$
which is not a zero divisor.
Then the exact sequence
$$
 0 \to S(-d) \xto{w} S \to R \to 0
$$
is a locally free resolution of the quotient ring $R = S / (w)$
as a graded $S$-module, and
$R$ is a graded connected Gorenstein ring
with parameter $a_R = a_S - d$
(see e.g. \cite[Proposition 2.2.10]{Goto-Watanabe_GRI}).
We write the natural projection as $\varphi : S \to R$,
which induces the functor $\Phi_{\gr} : \gr R \to \gr S$
as above.
In this case,
we have the following:

\begin{lemma} \label{lm:orthogonal}
If
$X \in D^b(\gr R)$
satisfies
$
 \RHom_R (R / \frakm_R(i), X) = 0
$
for any $i \le 0$, then
$\Phi_{\gr}(X) \in D^b(\gr S)$
satisfies
$
 \RHom_S(S / \frakm_S(i), \Phi_{\gr}(X)) = 0
$
for any $i \le 0$.
\end{lemma}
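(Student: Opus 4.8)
The plan is to reduce the desired vanishing over $S$ to the assumed vanishing over $R$ by means of the change-of-rings adjunction along $\varphi \colon S \to R$. Since $\Phi_{\gr}$ is restriction of scalars, its left adjoint is extension of scalars $- \otimes_S R \colon \gr S \to \gr R$, and the first step is to check that this lifts to a derived adjunction
$$
 \RHom_S(N, \Phi_{\gr}(X)) \cong \RHom_R\bigl(N \otimes_S^{L} R, \, X\bigr)
$$
for every $N \in \gr S$. This can be verified directly at the level of complexes: replacing $R$ by the bounded complex of free $S$-modules $\bigl[\, S(-d) \xto{w} S \,\bigr]$ coming from the resolution $0 \to S(-d) \xto{w} S \to R \to 0$, the object $N \otimes_S^{L} R$ is represented by a bounded complex of projective $R$-modules, and the standard adjunction $\Hom_R(P \otimes_S R, X) = \Hom_S(P, \Phi_{\gr}(X))$ applied to a projective resolution $P$ of $N$ over $S$ gives the isomorphism. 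I would emphasize that this argument needs no finiteness hypothesis on $X$: one resolves $R$, which has finite free dimension over $S$ by the given resolution, rather than $S/\frakm_S = k$, so no convergence issue arises.

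Next I would take $N = S/\frakm_S(i)$ and compute $S/\frakm_S(i) \otimes_S^{L} R$ explicitly. Using the same resolution, this derived tensor product is represented by the two-term complex $\bigl[\, S/\frakm_S(i-d) \xto{w} S/\frakm_S(i) \,\bigr]$ in cohomological degrees $-1$ and $0$. Since $d > 0$ we have $w \in \frakm_S$, so multiplication by $w$ annihilates $S/\frakm_S$ and the differential vanishes, giving
$$
 S/\frakm_S(i) \otimes_S^{L} R \cong S/\frakm_S(i) \oplus S/\frakm_S(i-d)[1].
$$
Because $(w) \subseteq \frakm_S$ one has $\frakm_R = \frakm_S/(w)$ and hence $R/\frakm_R = S/\frakm_S$ with matching gradings, so as an object of $D^b(\gr R)$ this reads $R/\frakm_R(i) \oplus R/\frakm_R(i-d)[1]$.

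Plugging this into the adjunction isomorphism yields
$$
 \RHom_S\bigl(S/\frakm_S(i), \Phi_{\gr}(X)\bigr) \cong \RHom_R\bigl(R/\frakm_R(i), X\bigr) \oplus \RHom_R\bigl(R/\frakm_R(i-d), X\bigr)[-1].
$$
For $i \le 0$ both $i$ and, since $d > 0$, $i - d$ are $\le 0$, so the hypothesis forces both summands on the right-hand side to vanish; hence $\RHom_S(S/\frakm_S(i), \Phi_{\gr}(X)) = 0$, as required. I expect the only genuinely delicate point to be the one in the first paragraph — making the change-of-rings adjunction precise on the unbounded derived categories without any assumption on $X$ — and, as indicated, this is handled cleanly by resolving $R$ over $S$ by a finite free complex. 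Everything after that is a formal consequence of the single fact that $w$ lies in $\frakm_S$.
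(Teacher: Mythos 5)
Your proof is correct and follows essentially the same route as the paper: both use the adjunction between extension and restriction of scalars along $\varphi$ together with the two-term free resolution $S(-d) \xto{w} S$ of $R$ to identify $\RHom_S(S/\frakm_S(i), \Phi_{\gr}(X))$ with $\RHom_R$ of the complex $\{R/\frakm_R(i-d) \to R/\frakm_R(i)\}$, and then invoke the hypothesis for both twists $i$ and $i-d \le 0$. Your extra observation that the differential vanishes (since $w \in \frakm_S$), so the complex splits as a direct sum, is a harmless refinement the paper does not bother with.
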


\begin{proof}
One has
\begin{align*}
 \RHom_S(S / \frakm_S(i), \Phi_{\gr}(X))
  &= \RHom_S(\Phi_{\gr}(R / \frakm_R(i)), \Phi_{\gr}(X)) \\
  &= \RHom_R((R / \frakm_R(i)) \Lotimes_S R, X) \\
  &= \RHom_R((R / \frakm_R(i)) \otimes_S \{ S(-d) \to S \}, X) \\
  &= \RHom_R( \{ R / \frakm_R(i-d) \to R / \frakm_R(i) \}, X),
\end{align*}
which vanishes since
$
 \RHom_R(R / \frakm_R(i-d), X)
  = \RHom_R(R / \frakm_R(i), X)
  = 0.
$
\end{proof}

Now we prove the following:

\begin{theorem} \label{th:functor2}
Let $S$ be a graded connected Gorenstein ring
with Gorenstein parameter $a_S = 0$,
and $R = S / (w)$ be the quotient ring
defined by a homogeneous element $w \in S_1$
of degree one
which is not a zero divisor.
Then the composite functor
$$
 D^b(\qgr R)
  \simto \scD_0^R
  \hookrightarrow \scT_0^R
  \simto D^b_\sing(\gr R)
  \xto{\Phi_{\sing}} D^b_\sing(\gr S)
  \simto \scT_0^S
  = \scD_0^S
  \simto D^b(\qgr S)
$$
is isomorphic to the functor
$\Phi_{\qgr} : D^b \qgr R \to D^b \qgr S$,
and the image of $R/\frakm_R \in \scT_0^R$ by
$$
 \scT_0^R
  \simto D^b_\sing(\gr R)
  \xto{\Phi_{\sing}} D^b_\sing(\gr S)
  \simto \scT_0^S
  = \scD_0^S
  \simto D^b(\qgr S)
$$
is isomorphic to $\scO[\dim S-1]$,
where $\scO$ is the image of the free module $S \in \gr S$
by the projection $\gr S \to \qgr S$ and
$[\dim S-1]$ is the shift in the derived category.
\end{theorem}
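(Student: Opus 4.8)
The plan is to unwind the two semiorthogonal decompositions
\eqref{eq:PT}, \eqref{eq:DS} and \eqref{eq:TD2} for $R$ and $S$
and track a single object through the chain of functors.
Since $a_R = a_S - 1 = -1$, the decomposition \eqref{eq:TD2}
for $S$ is trivial, giving $\scT_0^S = \scD_0^S$, while for $R$
it reads $\scT_0^R = \la \scD_0^R, R/\frakm_R \ra$, which is the
source of the two assertions. First I would take an object
$\scF \in D^b(\qgr R)$, lift it along
$D^b(\qgr R) \simto \scD_0^R \hookrightarrow D^b(\gr R_{\ge 0})$
to an object $X$ characterized by
$\RHom_R(R/\frakm_R(i), X) = 0$ for all $i \le 0$ (this is the
defining property of $\scD_0^R \subset D^b(\gr A_{\ge 0})$ coming
from \eqref{eq:DS}). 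I need to see that its image
$\Phi_{\gr}(X) \in D^b(\gr S)$, viewed in $\scT_0^S$ and then
in $D^b(\qgr S)$, agrees with $\Phi_{\qgr}(\scF)$. The key input
is Lemma \ref{lm:orthogonal}: it tells me precisely that
$\Phi_{\gr}(X)$ again satisfies
$\RHom_S(S/\frakm_S(i), \Phi_{\gr}(X)) = 0$ for $i \le 0$, hence
$\Phi_{\gr}(X) \in \scD_0^S$. Because $\scD_0^S \simeq D^b(\qgr S)$
is realized by the sheafification functor $\gr S \to \qgr S$
(Serre's theorem, as recalled after \eqref{eq:DS}), and because
sheafification intertwines $\Phi_{\gr}$ with $\Phi_{\qgr}$ by
construction of $\Phi_{\qgr}$, the image of $X$ in $D^b(\qgr S)$
is $\Phi_{\qgr}(\scF)$. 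The only subtlety is the passage through
$D^b_\sing(\gr R) \to D^b_\sing(\gr S)$: I must check that the
identification $\scT_0^R \simeq D^b_\sing(\gr R)$ followed by
$\Phi_{\sing}$ is compatible, at the level of $D^b(\gr{})$, with
the naive functor $\Phi_{\gr}$ applied to a representative lying
in $\scD_0^R \subset \scT_0^R$ — but this is exactly the content
of $\scT_0^R$ being the orthogonal to $\scP_{\ge 0}^R$, combined
with $\Phi_{\gr}$ sending $\scP_{\ge 0}^R$ into $\scP_{\ge 0}^S$
(which holds since $R$ has a length-one free resolution over $S$),
so $\Phi_{\gr}$ descends compatibly.

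For the second assertion I would trace $R/\frakm_R$ itself.
Under $\scT_0^R \simeq D^b_\sing(\gr R)$ it corresponds to the
residue field, and $\Phi_{\sing}(R/\frakm_R) = S/\frakm_S$ as a
complex of $S$-modules. I then need to express the class of
$S/\frakm_S$ inside $\scD_0^S \simeq D^b(\qgr S)$, i.e.\ to find
the object of $\scD_0^S \subset D^b(\gr S_{\ge 0})$ that is
isomorphic to $S/\frakm_S$ in $D^b_\sing(\gr S) = \scT_0^S$. The
mechanism is the standard one: resolve $S/\frakm_S$ by a finite
free resolution (a Koszul-type complex, since $S$ is Gorenstein
of parameter $0$ and $\frakm_S$ is generated by the variables),
and use that in $D^b_\sing$ a module is isomorphic to any shift
of the syzygy complex of its free resolution. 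Concretely, the
last free module in a minimal free resolution of $S/\frakm_S$,
together with the Gorenstein condition
$\RHom_S(k, S) = k(a_S)[-\dim S] = k[-\dim S]$ (parameter zero),
identifies the image with $\scO$ placed in the appropriate
cohomological degree; the shift works out to $[\dim S - 1]$
because the Gorenstein parameter $a_S = 0$ shifts the expected
$[\dim S]$ by the one unit coming from $d = \deg w = 1$ — this
is the same bookkeeping that produces $a_R = a_S - d$ and the
extra generator $R/\frakm_R$ in \eqref{eq:TD2}.

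The main obstacle I anticipate is not any single computation but
keeping the \emph{gradings and shifts} consistent across the
equivalences $\scT_0^A \simeq D^b_\sing(\gr A)$ and
$\scD_0^A \simeq D^b(\qgr A)$, whose explicit realizations (via
truncation, free resolution, and sheafification functors) must
be pinned down before the diagram can be seen to commute on the
nose rather than merely up to an unspecified autoequivalence.
In particular, the normalization that makes $\scO = $ image of
$S$ rather than some twist $\scO(j)$, and that fixes the shift
as exactly $\dim S - 1$, depends on choosing the resolution
$0 \to S(-d) \xto{w} S \to R \to 0$ and the induced comparison
of $\RHom_R$ and $\RHom_S$ as in the proof of
Lemma \ref{lm:orthogonal}. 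Once those conventions are fixed,
both statements follow by combining Lemma \ref{lm:orthogonal},
the decompositions \eqref{eq:DS}, \eqref{eq:PT}, \eqref{eq:TD2},
and the compatibility of $\Phi_{\gr}$ with the admissible
subcategories $\scP_{\ge 0}$ and $\scS_{\ge 0}$; no new
geometric input is required beyond what is already recalled in
this section.
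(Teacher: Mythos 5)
Your overall strategy coincides with the paper's: both halves are proved by tracking objects through the decompositions \eqref{eq:DS}, \eqref{eq:PT}, \eqref{eq:TD2}, with Lemma \ref{lm:orthogonal} as the key input showing that $\Phi_{\gr}$ carries $\scD_0^R$ directly into $\scD_0^S = \scT_0^S$ (so no further projection is needed and the three squares commute), and with the Gorenstein duality $\RHom_S(k,S) = k[-\dim S]$ pinning down the image of $S/\frakm_S$. The one place where your reasoning goes wrong is the bookkeeping for the shift $[\dim S-1]$: it does \emph{not} arise from $d = \deg w = 1$ correcting an ``expected'' $[\dim S]$, and the computation of the image of $S/\frakm_S$ in $\scT_0^S$ makes no reference to $w$ or to $a_R = a_S - d$ at all. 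The correct mechanism is the decomposition triangle for $S/\frakm_S$ with respect to $D^b(\gr S_{\ge 0}) = \la \scP_{\ge 0}^S, \scT_0^S \ra$: since $\Hom(S/\frakm_S, S(i)[\bullet])$ equals $k[-\dim S]$ for $i = 0$ and vanishes for $i < 0$, the $\scP_{\ge 0}^S$-component is $M = S[\dim S]$ and the $\scT_0^S$-component is $N = \Cone(S/\frakm_S[-1] \to S[\dim S-1])$; the $-1$ is simply the rotation of this triangle, and $N$ maps to $\scO[\dim S-1]$ in $D^b(\qgr S)$ because the torsion piece $S/\frakm_S[-1]$ dies under sheafification. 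Relatedly, ``the last free module in a minimal free resolution of $S/\frakm_S$'' does not exist when $S$ is not regular (the resolution is infinite and eventually periodic in the hypersurface case), so that heuristic should be replaced by the explicit cone construction above. With these corrections your argument is the paper's.
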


\begin{proof}
The equivalence
$
 D^b(\qgr R) \simto \scD_0^R
$
is inverse to the composition
$$
 \scD_0^R
  \hookrightarrow
 \la \scD_0^R, \scS_{\ge 0}^R \ra
  =
 D^b(\gr R_{\ge 0})
  \to
 D^b(\gr R_{\ge 0}) / \scS_{\ge 0}^R
  \cong
 D^b(\gr R) / D^b(\tor R)
  =
 D^b(\qgr R),
$$
and the equivalence
$
 \scD_0^S \simto D^b(\qgr S)
$
is defined similarly.
Let $\Phi_\scD : \scD_0^R \to \scD_0^S$
be the functor
defined as the composition
$$
 \scD_0^R
  \hookrightarrow
 D^b(\gr R_{\ge 0})
  \xto{\Phi_{\gr}}
 D^b(\gr S_{\ge 0})
  \to
 \scD_0^S,
$$
where the last arrow is the left adjoint functor
to the embedding
$$
 \scD_0^S
  \hookrightarrow
 D^b(\gr S_{\ge 0})
  =
 \la \scD_0^S, \scS_{\ge 0}^S \ra.
$$
We show that
the following diagram commutes
up to natural isomorphism:
\begin{equation} \label{eq:comm1}
\begin{CD}
 D^b(\qgr R) @>{\Phi_{\qgr}}>> D^b(\qgr S) \\
 @V{\vsim}VV @AA{\vsim}A \\
 \scD_0^R @>{\Phi_\scD}>> \scD_0^S.
\end{CD}
\end{equation}
The quotient category
$D^b(\qgr R)=D^b(\gr R) / D^b(\tor R)$
has the same set of objects
as $D^b(\gr R)$,
and only morphisms are different.
The functor $\Phi_{\qgr}$ sends
an object $X \in D^b(\qgr R)$,
which is a complex of $R$-modules,
to the same complex,
considered as an object of $D^b(\qgr S)$.
The equivalence
$
 D^b(\gr R_{\ge 0})/\scS_{\ge 0}^R
  \cong D^b(\gr R)/D^b(\tor R)
$
allows one to assume that $X$ is an object
of $D^b(\gr R_{\ge 0})$.
The functor $D^b(\qgr R) \simto \scD_0^R$ sends
the object $X$ to an object $M \in \scD_0^R$
which fits in a distinguished triangle
\begin{equation} \label{eq:n1}
 N \to X \to M \to N[1]
\end{equation}
with $N \in \scS_{\ge 0}^R$.
The image $\Phi_\scD(M) \in \scD_0^S$
fits in the distinguished triangle
\begin{equation} \label{eq:n2}
 N' \to M \to \Phi_\scD(M) \to N'[1]
\end{equation}
where $M$ is considered
as an object of $D^b(\gr S_{\ge 0})$
and $N' \in \scS_{\ge 0}^S$.
Since both $N$ and $N'$ belong
to $D^b(\gr S_{\ge 0})$
as a complex of $S$-modules,
both $N$ and $N'$ are isomorphic
to the zero object in $D^b(\qgr S)$ and
exact sequences \eqref{eq:n1} and \eqref{eq:n2}
give the isomorphisms
$X \simto M \simto \Phi_{\scD}(M)$
in $D^b(\qgr S)$.
This isomorphism is natural
since all constructions above are functorial,
and the commutativity of \eqref{eq:comm1} is proved.

One can similarly define the functor
$
 \Phi_\scT : \scT_0^R \to \scT_0^S
$
as the composition
$$
 \scT_0^R
  \hookrightarrow
 \la \scP_{\ge 0}^R, \scT_0^R \ra
  =
 D^b (\gr R_{\ge 0})
  \xto{\Phi_{\gr}}
 D^b (\gr S_{\ge 0})
  \to
 \scT_0^S
$$
and prove that
the following diagram also commutes
up to natural isomorphism:
\begin{equation} \label{eq:comm2}
\begin{CD}
 \scT_0^R @>{\Phi_\scT}>> \scT_0^S \\
 @V{\vsim}VV @AA{\vsim}A \\
 D^b_\sing(\gr R) @>{\Phi_{\sing}}>> D^b_\sing(\gr S)
\end{CD}
\end{equation}
To prove the first statement
in Theorem \ref{th:functor2},
it remains to see that
the following diagram commutes,
where $I : \scD_0^R \hookrightarrow \scT_0^R$
is the embedding
coming from the semiorthogonal decomposition
in \eqref{eq:TD2}:
\begin{equation} \label{eq:comm3}
\begin{CD}
 \scD_0^R @>{I}>> \scT_0^R \\
 @V{\Phi_\scD}VV @VV{\Phi_\scT}V \\
 \scD_0^S @= \scT_0^S
\end{CD}
\end{equation}
This comes from the fact
that for any object $X \in \scD_0^R \subset D^b(\gr R_{\ge 0})$,
the object
$\Phi_{\gr}(X) \in D^b(\gr S_{\ge 0})$
is right orthogonal to $\scS_{\ge 0}^S$
by Lemma \ref{lm:orthogonal},
and hence belongs to $\scD_0^S = \scT_0^S$;
$$
 \Phi_{\scD}(X) = \Phi_{\scT} \circ I (X) = \Phi_{\gr}(X).
$$

For the second statement,
note that the image of $R/\frakm_R \in \scT_0^R$ by the composition
$$
 \scT_0^R
  \simto D^b_\sing(\gr R)
  \xto{\Phi_{\sing}} D^b_\sing(\gr S)
$$
is $S/\frakm_S$.
Its image by the equivalence
$$
 D^b_\sing(\gr S)
  \cong D^b(\gr S_{\ge 0}) / \scP_{\ge 0}^S
  \simto \scT_0^S
$$
is the object $N \in \scT_0^S$
which fits in the distinguished triangle
$$
 N \to S/\frakm_S \to M \to N[1]
$$
with $M \in \scP_{\ge 0}^S$.
Since $S$ is Gorenstein with parameter zero,
one has
$$
 \Hom(S/\frakm_S, S(i)) =
\begin{cases}
 k[- \dim S] & i = 0, \\
 0 & \text{otherwise},
\end{cases}
$$
which shows that the cone
$
 N = \Cone(S/\frakm_S[-1] \to S[\dim S - 1])
$
belongs to $\scT_0^S$ and
satisfies the desired property
with $M = S[\dim S]$.
It is clear that
$
 S/\frakm_S[-1] \in D^b(\gr A_{\ge 0})
$
goes to
$
 0 \in D^b(\qgr A)
$
and
$
 S[\dim S - 1] \in D^b(\gr A_{\ge 0})
$
goes to
$
 \scO[\dim S - 1] \in D^b(\qgr A),
$
so that
$
 N \in \scT_0^S = \scD_0^S \subset D^b(\gr A_{\ge 0})
$
goes to
$
 \scO[\dim S - 1] \in D^b(\qgr A),
$
and Theorem \ref{th:functor2} it proved.
\end{proof}

\section{Spherical collections on K3 surfaces}
 \label{sc:spherical}

Let $\bX$ be the weighted projective line
with weight $\bp = (p_1, p_2, p_3)$
in the sense of Geigle and Lenzing \cite{Geigle-Lenzing_WPC}.
The abelian category $\coh \bX$ of coherent sheaves on $\bX$
is equivalent by Serre's theorem
\cite[Section 1.8]{Geigle-Lenzing_WPC}
to the quotient category
$
  \gr T / \tor T
$
of the abelian category $\gr T$
of finitely generated $L$-graded $T$-modules
by the full subcategory $\tor T$
consisting of torsion modules.
Here $L$ is the abelian group of rank one
generated by four elements
$\vecx_1$,
$\vecx_2$,
$\vecx_3$, and
$\vecc$
with relations
$
 p_1 \vecx_1 = p_2 \vecx_2 = p_3 \vecx_3 = \vecc,
$
and
$
 T = \bC[x_1, x_2, x_3] / (x_1^{p_1} + x_2^{p_2} + x_3^{p_3})
$
is an $L$-graded ring
of Krull dimension two.
Let
$$
 (\scP_\alpha)_{\alpha=1}^{p_1+p_2+p_3-1} =
  (\scO,
   U_1^{(1)}, \ldots, U_1^{(p_1-1)},
   U_2^1, \ldots, U_2^{(p_2-1)},
   U_3^1, \ldots, U_3^{(p_3-1)},
   \scO(-\vecomega-\vecc)[1])
$$
be the full strong exceptional collection
given by Lenzing and de la Pen\~{a}
\cite[Proposition 3.9]{Lenzing-de_la_Pena_ECA},
where $U_i^{(j)}$ are defined by
\begin{align*}
 U_i^{(j)} &=
  \coker( \scO(-(p_i-1) \vecx_i)
   \hookrightarrow \scO((-p_i+1+j) \vecx_i ),
\end{align*}
and
$
 \vecomega = \vecc - \vecx_1 - \vecx_2 - \vecx_3
  \in L
$
is the dualizing element
\cite[Theorem 2.2]{Geigle-Lenzing_WPC}.
Let
$
 (\scS_\beta)_{\beta=1}^{p_1+p_2+p_3-1}
$
be the right dual collection to
$
 (\scP_\alpha)_{\alpha=1}^{p_1+p_2+p_3-1},
$
which is characterized by the property
$$
 \dim \Hom(\scP_\alpha, \scS_\beta)
  = \delta_{\alpha, \, p_1+p_2+p_3-\beta},
$$
and given explicitly as
$$
 (\scS_\beta)_{\beta=1}^{p_1+p_2+p_3-1} =
  (\scO(-\vecc)[2],
    \scO(-\vecx_1)[1],
    S_1^{(p_1-2)}, \ldots, S_1^{(1)},
    \ldots,
    \scO(-\vecx_3)[1],
    S_3^{(p_3-2)}, \ldots, S_3^{(1)},
   \scO),
$$
where
$$
 S_i^{(j)} = \coker(\scO(-(p_i-j-2) \vecx_i)
  \hookrightarrow \scO(-(p_i-j-1) \vecx_i)).
$$
The total morphism algebra of the collection
$
 (\scP_\alpha)_{\alpha=1}^{p_1+p_2+p_3-1}
$
is isomorphic to the path algebra of the quiver
shown in Figure \ref{fg:strong_quiver},
where two dotted arrows represent two relations.
In terms of quiver representations,
$\scP_\alpha$ are projective modules and
$\scS_\alpha$ are simple modules,
and one has
$$
 \dim \Hom^i(\scS_\alpha, \scS_\beta) =
\begin{cases}
 \delta_{\alpha \beta} & i = 0, \\
 \#(\text{solid arrows from $\beta$ to $\alpha$})
   & i = 1, \\
 \#(\text{dotted arrows from $\beta$ to $\alpha$})
   & i = 2.
\end{cases}
$$

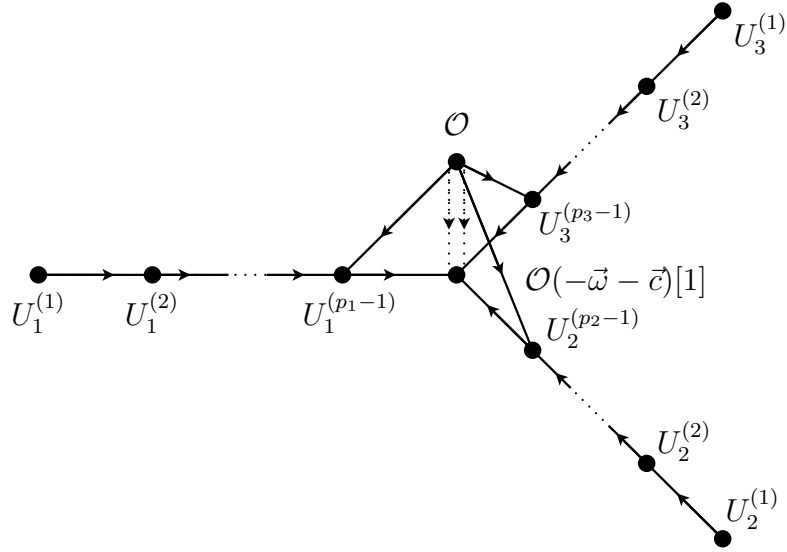
\begin{figure}
\centering
\ifx\JPicScale\undefined\def\JPicScale{1}\fi
\psset{unit=\JPicScale mm}
\psset{linewidth=0.3,dotsep=1,hatchwidth=0.3,hatchsep=1.5,shadowsize=1,dimen=middle}
\psset{dotsize=0.7 2.5,dotscale=1 1,fillcolor=black}
\psset{arrowsize=1 2,arrowlength=1,arrowinset=0.25,tbarsize=0.7 5,bracketlength=0.15,rbracketlength=0.15}
\begin{pspicture}(0,0)(105,76)
\rput{0}(10,40){\psellipse[fillstyle=solid](0,0)(1,-1)}
\psline[linestyle=dotted]{->}(10,40)(45,40)
\psline(10,40)(35,40)
\psline(40,40)(65,40)
\psline[linestyle=dotted]{->}(85,60)(78,53)
\psline(65,40)(80,55)
\psline(85,60)(100,75)
\psline[linestyle=dotted]{->}(85,20)(78,27)
\psline(65,40)(80,25)
\psline(85,20)(100,5)
\psline(75,50)(65,55)
\psline(50,40)(65,55)
\psline(75,30)(65,55)
\psline{->}(10,40)(20,40)
\psline{->}(25,40)(30,40)
\psline{->}(50,40)(57,40)
\psline{->}(75,30)(69,36)
\psline{->}(90,15)(86,19)
\psline{->}(100,5)(94,11)
\psline{->}(65,55)(70,52.5)
\psline{->}(65,55)(55,45)
\psline{->}(65,55)(71,40)
\psline{->}(75,50)(70,45)
\psline{->}(90,65)(86,61)
\psline{->}(100,75)(94,69)
\psline[linestyle=dotted](66,40)(66,55)
\psline[linestyle=dotted](64,40)(64,55)
\psline[linestyle=dotted]{->}(64,55)(64,46)
\psline[linestyle=dotted]{->}(66,55)(66,46)
\rput{0}(25,40){\psellipse[fillstyle=solid](0,0)(1,-1)}
\rput{0}(50,40){\psellipse[fillstyle=solid](0,0)(1,-1)}
\rput{0}(65,40){\psellipse[fillstyle=solid](0,0)(1,-1)}
\rput{0}(75,50){\psellipse[fillstyle=solid](0,0)(1,-1)}
\rput{0}(90,65){\psellipse[fillstyle=solid](0,0)(1,-1)}
\rput{0}(65,55){\psellipse[fillstyle=solid](0,0)(1,-1)}
\rput{0}(75,30){\psellipse[fillstyle=solid](0,0)(1,-1)}
\rput{0}(100,75){\psellipse[fillstyle=solid](0,0)(1,-1)}
\rput{0}(90,15){\psellipse[fillstyle=solid](0,0)(1,-1)}
\rput{0}(100,5){\psellipse[fillstyle=solid](0,0)(1,-1)}
\rput(10,35){$U_1^{(1)}$}
\rput(25,35){$U_1^{(2)}$}
\rput(51,35){$U_1^{(p_1-1)}$}
\rput(104,9){$U_2^{(1)}$}
\rput(95,18){$U_2^{(2)}$}
\rput(83,33){$U_2^{(p_2-1)}$}
\rput(65,60){$\scO$}
\rput(86,39){$\scO(-\vecomega-\vecc)[1]$}
\rput(105,72){$U_3^{(1)}$}
\rput(95,62){$U_3^{(2)}$}
\rput(82,47){$U_3^{(p_3-1)}$}
\end{pspicture}
\caption{The full strong exceptional collection on $\bX_\bp$}
\label{fg:strong_quiver}
\end{figure}


Let $\scK$ be the total space
of the canonical bundle of $\bX$.
Since the collection
$
 ( \scS_{\alpha} )_{\alpha=1}^{p_1+p_2+p_3-1}
$
is full,
the push-forward
$
 ( \iota_* \scS_{\alpha} )_{\alpha=1}^{p_1+p_2+p_3-1}
$
generates the derived category $D^b \coh_\bX \scK$
of coherent sheaves on $\scK$
supported on the image of the zero section
$\iota : \bX \to \scK$.

\begin{theorem}[{Segal \cite[Theorem 4.2]{Segal_ADT},
Ballard \cite[Proposition 4.14]{Ballard_SLCY}}]
Let $\scS$ be an object of $D^b \coh \bX$ and
$\iota_* \scS$ be the push-forward of $\scS$
along the zero-section.
Then the endomorphism dg algebra of $\iota_* \scS$
is the trivial extension of the endomorphism dg algebra of $\scS$.
\end{theorem}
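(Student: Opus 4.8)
I would prove this statement by a local-to-global argument combined with a formal/Koszul-duality computation on the zero section. The key point is that $\scK$ is the total space of a line bundle over $\bX$, so near the zero section the geometry is that of $\scO_\bX(\vecomega)$ (the canonical bundle of $\bX$), and an object supported on the zero section is, up to the usual twist, a module over the symmetric algebra $\Sym_{\scO_\bX}(\scO_\bX(-\vecomega))$, equivalently a sheaf with a square-zero action of a degree-shifted line bundle.

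\medskip

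\noindent\textbf{Step 1: Reduce to a Koszul-type resolution.} Let $p : \scK \to \bX$ be the projection and $\iota : \bX \hookrightarrow \scK$ the zero section, so that $\iota_* \scS \simeq p^* \scS \Lotimes_{\scO_\scK} \iota_* \scO_\bX$. I would resolve $\iota_* \scO_\bX$ by the Koszul complex $\{ p^* \scO_\bX(\vecomega) \to \scO_\scK \}$ of the tautological section, which computes $\iota^* \iota_* \scO_\bX = \scO_\bX \oplus \scO_\bX(\vecomega)[1]$. This is the source of the ``extra'' summand responsible for the trivial extension.

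\medskip

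\noindent\textbf{Step 2: Compute the endomorphism complex.} Using the projection formula and adjunction $\iota^* \dashv \iota_*$ (valid since $\iota$ is a closed immersion of smooth stacks), one gets
$$
 \RHom_{\scK}(\iota_* \scS, \iota_* \scS)
   \simeq \RHom_{\bX}(\iota^* \iota_* \scS, \scS)
   \simeq \RHom_{\bX}(\scS \oplus \scS(\vecomega)[1], \scS).
$$
Hence on cohomology,
$$
 \Hom_{\scK}^k(\iota_* \scS, \iota_* \scS)
  \cong \Hom_{\bX}^k(\scS, \scS) \oplus \Hom_{\bX}^{k-1}(\scS(\vecomega), \scS).
$$
By Serre duality on $\bX$ (whose dualizing sheaf is $\scO_\bX(\vecomega)$, as recalled in the excerpt via the dualizing element $\vecomega$), the second summand is $\Hom_{\bX}^{k-1}(\scS(\vecomega), \scS) \cong \Hom_{\bX}^{2-k}(\scS, \scS)^\vee$, since $\bX$ has ``dimension one.'' So additively the endomorphism algebra is $\Hom_\bX^\bullet(\scS,\scS) \oplus \Hom_\bX^\bullet(\scS,\scS)^\vee[-2]$, which is exactly the underlying complex of the trivial extension.

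\medskip

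\noindent\textbf{Step 3: Upgrade to a dg-algebra isomorphism.} The content beyond the additive statement is that the $A_\infty$ (or dg) structure is the \emph{trivial} extension: the new part is a square-zero bimodule and the higher products mixing old and new parts are the canonical ones dual to the old products, with no extra terms. I would obtain this by choosing an explicit dg-model for $\RHom_\scK(\iota_*\scS,\iota_*\scS)$ from the Koszul resolution in Step 1 --- e.g. a Čech model or a curved-algebra model --- in which the Koszul differential is literally the tautological section and the multiplication is the evident one; the square-zero property of the self-intersection class $\iota^*\iota^! \scO_\bX \to \scO_\bX$ then gives the trivial-extension relations. Alternatively, one can cite that this formal neighborhood computation is local on $\bX$ and reduces to the statement for a point (a smooth affine line bundle), where it is classical.

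\medskip

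\noindent\textbf{Main obstacle.} The additive computation (Step 2) is routine; the real work is Step 3, matching the full $A_\infty$-structure and in particular checking that the Serre-duality pairing that identifies the ``dual'' summand is compatible with multiplication, so that one gets precisely the trivial extension (equivalently the cyclic completion of Segal) and not some twisted or curved deformation of it. This is where I would lean on the cited results of Segal and Ballard, whose proofs handle exactly this point by working with a global section of $\scO_\scK$ cutting out $\bX$ and a Hochschild-cochain / $A_\infty$-Yoneda argument; I would either reproduce that argument in the present $\bX = \bX_\bp$ setting or simply invoke it, noting that $\bX_\bp$ is a smooth proper Deligne--Mumford curve so all hypotheses are met.
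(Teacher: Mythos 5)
The paper does not prove this statement at all: it is quoted verbatim from Segal and Ballard, and the body of the paper only uses its consequence for $\Hom^i(\iota_*\scS_\alpha,\iota_*\scS_\beta)$. So there is no internal proof to compare against, and your outline should be judged against the cited sources, whose strategy (Koszul resolution of the zero section, adjunction, then an $A_\infty$/Hochschild argument to pin down the multiplicative structure) is exactly the one you describe. Your Step~3 honestly defers the only genuinely hard point --- that the product is the \emph{trivial} extension and not a twisted or curved deformation --- to Segal and Ballard, which is consistent with how the paper itself treats the result.

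One concrete slip in Step~1--2: the ideal sheaf of the zero section in the total space $\scK$ of $\omega_\bX = \scO_\bX(\vecomega)$ is generated by the \emph{conormal} bundle $\scN_{\bX/\scK}^\vee \cong \scO_\bX(-\vecomega)$ (as the paper itself records), so the Koszul resolution is $\{p^*\scO_\bX(-\vecomega) \to \scO_\scK\}$ and
$$
 \iota^*\iota_*\scS \simeq \scS \oplus \scS(-\vecomega)[1],
$$
not $\scS \oplus \scS(\vecomega)[1]$. With your sign, Serre duality on the weighted projective curve $\bX$ gives $\Ext^{k-1}(\scS(\vecomega),\scS) \cong \Ext^{2-k}(\scS,\scS(2\vecomega))^\vee$, which is not the summand $\Ext^{2-k}(\scS,\scS)^\vee$ required for the trivial extension; with the corrected twist one gets $\Ext^{k-1}(\scS(-\vecomega),\scS) \cong \Ext^{2-k}(\scS,\scS\otimes\omega_\bX(-\vecomega))^\vee = \Ext^{2-k}(\scS,\scS)^\vee$ as desired. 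This is a fixable bookkeeping error rather than a gap in the method, but as written the additive computation does not close.
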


It follows that
$$
 \Hom^i(\iota_* \scS_\alpha, \iota_* \scS_\beta)
  = \Hom^i(\scS_\alpha, \scS_\beta)
     \oplus \Hom^{2-i}(\scS_\beta, \scS_\alpha)^\vee,
$$
so that
\begin{equation} \label{eq:ab}
 \chi(\iota_* \scS_\alpha, \iota_* \scS_\beta) =
  \begin{cases}
   2 & \text{if $\alpha = \beta$}, \\
   -1 & \text{if $\alpha$ and $\beta$ are connected by a solid arrow}, \\
   2 & \text{if $\alpha$ and $\beta$ are connected by two dotted arrows}, \\
   0 & \text{otherwise}
 \end{cases}
\end{equation}
for $1 \le \alpha, \beta \le p_1+p_2+p_3-1$.

Let $\scY$ be a very general hypersurface of degree $h$
in $\bP(a, b, c, 1)$,
where $(a,b,c;h)$ is a weight system
in Table \ref{tb:exceptional}.
The divisor
$
 \scY_\infty = \{ w = 0 \} \subset \scY
$
at infinity is a weighted projective line
whose weight is given by the Dolgachev number of the singularity;
$
 (p_1,p_2,p_3) = (\delta_1,\delta_2,\delta_3).
$
Note that 
the formal neighborhood of $\scY_\infty$ in $\scY$
is isomorphic to the formal neighborhood of $\bX$ in $\scK$;
it suffices
(see e.g. \cite[Theorem 1.6]{Camacho-Movasati}
)
to show
$
 H^1(\scT_\bX \otimes (\scN_{\bX / \scK}^\vee)^\nu) = 0
$
and
$
 H^1((\scN_{\bX / \scK}^\vee)^\nu) = 0
$
for any $\nu \ge 1$,
which easily follows from the fact that
both the tangent sheaf
$\scT_\bX$
and the conormal sheaf
$\scN_{\bX / \scK}^\vee$
are isomorphic to
$
  \scO_\bX(- \vecomega).
$
We fix such an isomorphism,
which induces an equivalence
\begin{equation} \label{eq:KvsX}
 D^b \coh_{\bX} \scK \cong D^b \coh_{\scY_\infty} \scY
\end{equation}
of triangulated categories.
Since
$$
 \Hom^*(\scO_\scY, \iota_* \scS_\alpha)
  \cong H^* (\iota_* \scS_\alpha)
  \cong H^* (\scS_\alpha)
  \cong \Hom^* (\scS_{p_1+p_2+p_3-1}, \scS_\alpha),
$$
one has
\begin{equation} \label{eq:0a}
 \dim \Hom^i(\scO_\scY[1], \iota_* \scS_\alpha)
  = \delta_{i 1} \delta_{\alpha, \, p_1+p_2+p_3-1},
\end{equation}
so that the Euler form on the spherical collection
$
 (\scO_{\scY}[1], \iota_* \scS_1, \ldots,
  \iota_* \scS_{p_1+p_2+p_3-1})
$
is identical to the spherical collection
in Figure \ref{fg:EP_collection}
given by Ebeling and Ploog \cite{Ebeling-Ploog_MCPS}.

\begin{lemma} \label{lem:split-generate}
The spherical collection
$$
 (\scO_{\scY}, \iota_* \scS_1, \ldots,
   \iota_* \scS_{p_1+p_2+p_3-1})
$$
split-generates $D^b \coh \scY$.
\end{lemma}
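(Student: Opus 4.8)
The plan is to show that the collection $(\scO_\scY, \iota_* \scS_1, \ldots, \iota_* \scS_{p_1+p_2+p_3-1})$ already generates a triangulated subcategory that, after passing to idempotent completion, must be all of $D^b \coh \scY$. The starting point is that by \eqref{eq:KvsX} the objects $\iota_* \scS_\alpha$ span $D^b \coh_{\scY_\infty} \scY$, the subcategory of complexes set-theoretically supported on the divisor at infinity $\scY_\infty = \{w = 0\}$. So the only thing missing is the ``interior'' $\scY \setminus \scY_\infty$, which is precisely the (stacky) Milnor fiber $[(g^{-1}(0) \cap \{w \ne 0\})/\bCx] \cong [g^{-1}(0)_{w=1}]$, an affine variety. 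First I would invoke the localization sequence: the quotient of $D^b \coh \scY$ by the subcategory generated by $\{\iota_* \scS_\alpha\}$ — equivalently by $D^b \coh_{\scY_\infty} \scY$ — is equivalent to $D^b \coh(\scY \setminus \scY_\infty)$, i.e.\ $D^{b}$ of coherent sheaves on the affine Milnor fiber (this is Thomason--Neeman localization; one uses that $\scY$ is a smooth proper DM stack so all these categories are well-behaved).

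The key step is then to check that the single sheaf $\scO_\scY$ already generates this affine quotient. On an affine variety $\Spec A$ the structure sheaf $\scO$ is a generator of $D^b \coh$ (indeed a compact generator of $D(\mathrm{QCoh})$, since $A$ as an $A$-module generates all finitely generated modules by taking cokernels of maps between free modules, and $D^b \coh \Spec A$ is generated by modules placed in a single degree). The restriction of $\scO_\scY$ to $\scY \setminus \scY_\infty$ is exactly $\scO_{\scY \setminus \scY_\infty}$, so its image in the quotient category generates. Combining: the subcategory $\langle \scO_\scY, \iota_* \scS_1, \ldots, \iota_* \scS_{p_1+p_2+p_3-1}\rangle^{\mathrm{thick}}$ maps onto the whole quotient $D^b \coh(\scY \setminus \scY_\infty)$ and contains the whole kernel $D^b \coh_{\scY_\infty}\scY$, hence by the standard ``two-out-of-three'' argument for generation along a localization sequence it is all of $D^b \coh \scY$, at least after idempotent completion — which is why the statement is phrased in terms of \emph{split}-generation rather than generation.

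I would need to be slightly careful about the affine-ness of $\scY \setminus \scY_\infty$ as a stack: it is a global quotient of an affine scheme by $\bCx$ (or, after setting $w=1$, the affine hypersurface $\{g(x,y,z,1)=0\}$ itself), and for such quotient stacks the structure sheaf is still a compact generator of the derived category of quasi-coherent sheaves because $\bCx$ is linearly reductive — equivalently, one reduces to the honest affine scheme by noting the $\bCx$-action is free away from the origin, which has already been removed. The main obstacle, and the reason split-generation (not generation) is the right conclusion, is exactly this passage through the quotient: generation of a quotient category plus generation of the kernel yields generation of the total category only up to summands unless one has an explicit triangle, and here the gluing of an interior resolution to a boundary resolution is not canonical. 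So the proof is short modulo (i) the localization sequence identifying the quotient with $D^b\coh$ of the affine Milnor fiber, (ii) the fact that $\scO$ generates on an affine quotient stack, and (iii) the observation that $\scO_\scY$ restricts to that generator; the first of these is where I would spend the most care in writing out a clean argument.
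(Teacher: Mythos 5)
Your argument is correct, but it takes a genuinely different route from the paper's. You localize: granting that the $\iota_* \scS_\alpha$ generate $D^b \coh_{\scY_\infty} \scY$ (which the paper establishes just before the lemma, via fullness of $(\scS_\alpha)$ on $\bX$ and the equivalence \eqref{eq:KvsX}), the Verdier quotient of $D^b \coh \scY$ by that subcategory is $D^b \coh$ of the open complement, which is indeed an honest smooth affine surface $\{g(x,y,z,1)=0\}$ because $w$ has weight one and the $\bCx$-action on $\{w \ne 0\}$ is therefore free; there $\scO$ split-generates, and you then lift generation back across the localization, correctly identifying the Thomason--Neeman density phenomenon as the source of the word ``split''. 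Your lifting step can be made airtight: if a thick subcategory contains the kernel of the localization and its image is dense in the (idempotent complete) quotient, then for any object $X$ one has $X \oplus Y$ isomorphic in the quotient to an object of the subcategory; any such isomorphism is represented by a roof both of whose legs have cones in the kernel, so $X \oplus Y$ lies in the subcategory and hence so does $X$ by thickness. The paper's proof avoids localization entirely and argues with ampleness: the line bundles $\scO_\scY(-k\scY_\infty)$ lie in the triangulated subcategory generated by the collection because the quotient $\scO_\scY / \scO_\scY(-k\scY_\infty)$ is supported on $\scY_\infty$; since $\scY_\infty$ is an ample hyperplane section, every coherent sheaf admits a resolution by direct sums of these line bundles; and smoothness of $\scY$ allows one to truncate the resolution after more than $\dim \scY$ steps and split off the tail, which is where the direct summands appear. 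Both proofs rest on the same input (generation of $D^b \coh_{\scY_\infty} \scY$ by the $\iota_* \scS_\alpha$); yours is more conceptual and isolates the role of the affine interior, at the price of invoking the localization theorem for the stack $\scY$ and the dense-subcategory bookkeeping, while the paper's is elementary and self-contained.
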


\begin{proof}
The line bundle $\scO_{\scY}(- k \scY_\infty)$
is contained in the full triangulated subcategory
of $D^b \coh \scY$
generated by the above spherical collection
for any $k \in \bN$,
since the cokernel of the inclusion
$
 \scO_{\scY}(- k \scY_\infty) \hookrightarrow \scO_\scY
$
is supported on $\scY_\infty$
and hence contained in $\coh_{\scY_\infty} \scY$.
For any coherent sheaf $\scE$,
there is a surjection
$$
 \varphi_0 : \scO_\scY(- n_0 \scY_\infty)^{\oplus k_0} \to \scE
$$
for sufficiently large $n_0$ and $k_0$
(i.e. the hyperplane section $\scY_\infty$ is ample).
Let $\scE_1 = \ker \varphi_0$ be the kernel of this morphism.
Then there is a surjection
$$
 \varphi_1 : \scO_\scY(- n_1 \scY_\infty)^{\oplus k_1} \to \scE_1
$$
for sufficiently large $n_1$ and $k_1$,
and one can set $\scE_2 = \ker \varphi_1$.
By repeating this process,
one obtains a distinguished triangle
$$
 \scE_{k+1}[k] \to \scF \to \scE \xto{[+1]} \scE_{k+1}[k+1],
$$
where $\scE_{k+1}$ is a coherent sheaf and
$$
 \scF = \lc \scO_\scY(- n_k \scY_\infty)^{\oplus m_k}
  \xto{\varphi_{k}}
   \scO_\scY(- n_{k-1} \scY_\infty)^{\oplus m_{k-1}}
  \xto{\varphi_{k-1}} \cdots \xto{\varphi_0}
   \scO_\scY(-n_0 \scY_\infty)^{\oplus k_0} \rc
$$
for any $k \ge 0$.
Since $\scY$ is smooth,
the homological dimension of $\coh \scY$ is equal
to the dimension of $\scY$, and
this triangle splits for $k > \dim \scY$.
It follows that any coherent sheaf is a direct summand
of a complex of locally free sheaves
contained in the full triangulated subcategory of $D^b \coh \scY$
generated by $(\scS_\beta)_{\beta=0}^{p_1+p_2+p_3-1}$,
and Lemma \ref{lem:split-generate} is proved.
\end{proof}


Let $Y$ be the minimal resolution
of the coarse moduli space of $\scY$.
It can be realized as an anticanonical K3 hypersurface
in a toric weak Fano manifold $X$
\cite{Kobayashi_DW}.
It contains the Milnor fiber as an open subset, and
the complement consists of chains of $(-2)$-curves
intersecting as in Figure \ref{fg:divisor_graph}.
It follows that the transcendental lattice of $Y$
is isomorphic to the Milnor lattice of $Y$.
By the McKay correspondence
as a derived equivalence
\cite{Kapranov-Vasserot, Bridgeland-King-Reid},
one has an equivalence
\begin{equation} \label{eq:derived_equiv}
 \Upsilon : D^b \coh \scY \simto D^b \coh Y
\end{equation}
of triangulated categories.
Set $\scE_0 = \scO_Y[1]$ and
$\scE_\alpha = \Upsilon \circ \iota_* (\scS_\alpha)$
for $\alpha = 1, \ldots, p_1+p_2+p_3-1$.

\begin{proposition} \label{prop:NY}
The numerical Grothendieck group $\scN(Y)$ is spanned by
$
 ([\scE_\alpha])_{\alpha=0}^{p_1+p_2+p_3-1}
$
and isomorphic to the lattice $\That(p_1, p_2, p_3)$.
\end{proposition}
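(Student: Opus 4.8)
The plan is to read off the Gram matrix of the classes $[\scE_\alpha]$ from the Euler-form computations already made, and then to pin down the index of the sublattice they span inside $\scN(Y)$. By the derived equivalence \eqref{eq:derived_equiv} (which sends $\scO_\scY$ to $\scO_Y$, so that $\scE_0$ is the image of $\scO_\scY[1]$) the Euler pairing is preserved, hence for $0 \le \alpha,\beta \le p_1+p_2+p_3-1$ the integers $\chi(\scE_\alpha,\scE_\beta)$ are those of \eqref{eq:ab} and \eqref{eq:0a} together with $\chi(\scE_0,\scE_0)=\chi(\scO_Y,\scO_Y)=2$. By Riemann--Roch this is, up to an overall sign, the Gram matrix of $([\scE_\alpha])$ for the Mukai pairing on $\scN(Y)$, and it is symmetric since the endomorphism dg algebra of a push-forward is a trivial extension and Serre duality on the K3 surface $Y$ gives $\chi(\scE_0,\scE_\alpha)=\chi(\scE_\alpha,\scE_0)$. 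Comparing with Figure \ref{fg:Dynkin-Milnor}, and noting that $\scE_0=\scO_Y[1]$ is joined to $\scS_{p_1+p_2+p_3-1}=\scO$ via \eqref{eq:0a} --- which lengthens one arm of the star-shaped quiver of Figure \ref{fg:strong_quiver} by a single node --- this matrix is, up to the standard sign convention, exactly the Gram matrix of the lattice $\That(p_1,p_2,p_3)$; this is the coincidence of Euler forms with the Ebeling--Ploog collection already recorded just before Lemma \ref{lem:split-generate}. Being non-degenerate, the matrix shows that the $[\scE_\alpha]$ are linearly independent, so they span a sublattice $L\subseteq\scN(Y)$ isometric to $\That(p_1,p_2,p_3)$.

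It remains to see $L=\scN(Y)$. First, $\operatorname{rank}\scN(Y)=2+\rho(Y)=p_1+p_2+p_3$: indeed the transcendental lattice of $Y$, of rank $\mu_f$, is the Milnor lattice of $f$ (the discussion preceding \eqref{eq:derived_equiv}), so $\rho(Y)=22-\mu_f$, and $\mu_f+\delta_1+\delta_2+\delta_3=24$ for the $14$ exceptional unimodal singularities \cite{Arnold_ICM} (alternatively, $\operatorname{rank}\scN(Y)$ is the number of elements of the Ebeling--Ploog basis). Thus $L$ is a full-rank sublattice and $[\scN(Y):L]^2=|\!\det L|/|\!\det\scN(Y)|$. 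Now $\scN(Y)=\bigl(H^0(Y,\bZ)\oplus H^4(Y,\bZ)\bigr)\perp\NS(Y)$ is a primitive sublattice of the unimodular Mukai lattice $H^\bullet(Y,\bZ)$ whose orthogonal complement is the transcendental lattice, so $|\!\det\scN(Y)|$ equals the absolute discriminant of the Milnor lattice of $f$, i.e.\ of the lattice $\That(\gamma_1,\gamma_2,\gamma_3)$ of Figure \ref{fg:Dynkin-Milnor} with the Gabrielov numbers. Strange duality for the $14$ exceptional unimodal singularities (Pinkham \cite{Pinkham_strange-duality}, Dolgachev \cite{Dolgachev_IQF}, Nikulin \cite{Nikulin_ISBF}; or a direct check over the six self-dual cases and the four dual pairs of Table \ref{tb:exceptional}) then gives $|\!\det\That(\delta_1,\delta_2,\delta_3)|=|\!\det\That(\gamma_1,\gamma_2,\gamma_3)|$. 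Since $|\!\det L|=|\!\det\That(p_1,p_2,p_3)|=|\!\det\That(\delta_1,\delta_2,\delta_3)|$, we conclude $[\scN(Y):L]=1$, so $([\scE_\alpha])_{\alpha=0}^{p_1+p_2+p_3-1}$ is a $\bZ$-basis of $\scN(Y)$ and, by the first paragraph, $\scN(Y)\cong\That(p_1,p_2,p_3)$. One may also simply quote that the Ebeling--Ploog collection, which has the same Gram matrix and the same number of elements, is known \cite{Ebeling-Ploog_MCPS} to be an integral basis of $\scN(Y)$, whence $|\!\det L|=|\!\det\scN(Y)|$ and the same conclusion follows.

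The main obstacle is this last step: a full-rank sublattice isometric to $\That(p_1,p_2,p_3)$ need not be the whole of $\scN(Y)$, and ruling out a proper finite index is precisely where the discriminant comparison --- equivalently, the integrality of the Ebeling--Ploog basis, equivalently strange duality --- is genuinely used. Note in particular that the split-generation of $D^b\coh Y$ (Lemma \ref{lem:split-generate} transported by \eqref{eq:derived_equiv}) does \emph{not} settle this: it only shows that the $[\scE_\alpha]$ generate a subgroup of $K(Y)$, which for a triangulated category generated up to direct summands by a set need not be all of $K_0$. A secondary, purely combinatorial, point of care is to verify that the matrix of \eqref{eq:ab}--\eqref{eq:0a} is literally --- signs included --- the Gram matrix attached to the diagram $\That(p_1,p_2,p_3)$ of Figure \ref{fg:Dynkin-Milnor}, and that adjoining the vertex $\scE_0$ attaches it at the correct end.
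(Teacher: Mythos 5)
Your proof is correct, but it takes a genuinely different route from the paper. The paper argues generation directly: $\scN(Y)$ is generated by $[\scO_Y]$, $\NS(Y)$ and the class of a point; for very general $Y$ the N\'eron--Severi group is generated by the components of the divisor $E$ at infinity; these components and a skyscraper sheaf on $E$ lie in $D^b \coh_E Y \cong D^b \coh_{\scY_\infty}\scY$, which is generated (by cones and shifts, not merely split-generated) by the $\iota_*\scS_\alpha$, so their classes lie in the span of $([\scE_\alpha])_{\alpha=1}^{p_1+p_2+p_3-1}$; a rank count then upgrades the generating set to a basis, and the Gram matrix identifies the lattice. You instead stop at linear independence and full rank, and kill the possible finite index by comparing discriminants, using primitivity of $\scN(Y)$ in the unimodular Mukai lattice, the identification of the transcendental lattice with the Milnor lattice $\That(\bsgamma)$, and the equality $|\det\That(\bsdelta)|=|\det\That(\bsgamma)|$ across a strange-dual pair. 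That last equality does not follow from the bare statement of strange duality (equality of triples across the pair); it needs either the Pinkham/Dolgachev--Nikulin lattice-theoretic form of the duality or the direct computation $\det T_{p,q,r}=pqr-pq-qr-rp$ checked over Table \ref{tb:exceptional}, both of which you correctly point to, so this is a legitimate (if heavier) closing step. Your approach buys independence from the classical description of $\NS(Y)$ of a very general member as generated by the curves at infinity, at the price of more lattice theory and a case check; and your remark that split-generation alone cannot give surjectivity on $K_0$ is exactly the right thing to worry about --- it is why the paper's own argument must (and does) rest on genuine generation of $D^b \coh_{\scY_\infty}\scY$ by the pushed-forward exceptional collection.
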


\begin{proof}
The numerical Grothendieck group $\scN(Y)$ is generated
by the class $[\scO_Y]$ of the structure sheaf,
the N\'{e}ron--Severi group $\NS(Y)$,
and the class $[\scO_p]$ of a skyscraper sheaf.
The structure of $\NS(Y)$ for very general $Y$ is well studied
(see e.g. \cite{Belcastro_PLFK3S}),
and generated by the irreducible components
of the divisor
$
 E = E_\infty \cup \bigcup_{i=1}^3 \bigcup_{j=1}^{p_i-1} E_j^i
$
at infinity.
Both the structure sheaves of irreducible components of $E$
and a skyscraper sheaf $\scO_p$ on $E$ belong to $D^b \coh_E Y$,
which is equivalent to $D^b \coh_{\scY_\infty} \scY$
by the functor $\Upsilon$.
Since $D^b \coh_{\scY_\infty} \scY$ is generated by
$
 (\iota_* \scS_\alpha)_{\alpha=1}^{p_1+p_2+p_3-1},
$
the collection
$
 ([\scE_\alpha])_{\alpha=1}^{p_1+p_2+p_3-1}
$
generates $\NS(Y)$ and $[\scO_p]$,
so that the collection
$
 ([\scE_\alpha])_{\alpha=0}^{p_1+p_2+p_3-1}
$
generates $\scN(Y)$.
Since $\rank \scN(Y) = p_1+p_2+p_3$,
the collection
$
 ([\scE_\alpha])_{\alpha=0}^{p_1+p_2+p_3-1}
$
is a basis of $\scN(Y)$.
It is clear from \eqref{eq:ab} and \eqref{eq:0a} that
$\scN(Y)$ is isomorphic to $\That(p_1, p_2, p_3)$
as a lattice, and
Proposition \ref{prop:NY} is proved.
\end{proof}

It is an interesting problem to see if the collection
$
 (\scE_\alpha)_{\alpha=0}^{p_1+p_2+p_3-1}
$
can be related to the collection of Ebeling and Ploog
\cite{Ebeling-Ploog_MCPS}
shown in Figure \ref{fg:EP_collection}
by an autoequivalence of $D^b \coh Y$.

\begin{figure}
\centering
\ifx\JPicScale\undefined\def\JPicScale{1}\fi
\psset{unit=\JPicScale mm}
\psset{linewidth=0.3,dotsep=1,hatchwidth=0.3,hatchsep=1.5,shadowsize=1,dimen=middle}
\psset{dotsize=0.7 2.5,dotscale=1 1,fillcolor=black}
\psset{arrowsize=1 2,arrowlength=1,arrowinset=0.25,tbarsize=0.7 5,bracketlength=0.15,rbracketlength=0.15}
\begin{pspicture}(0,0)(110,76)
\rput{0}(10,40){\psellipse[fillstyle=solid](0,0)(1,-1)}
\psline[linestyle=dotted](10,40)(45,40)
\psline(10,40)(35,40)
\psline(40,40)(65,40)
\psline[linestyle=dotted](85,60)(78,53)
\psline(65,40)(80,55)
\psline(85,60)(100,75)
\psline[linestyle=dotted](85,20)(78,27)
\psline(65,40)(80,25)
\psline(85,20)(100,5)
\rput{0}(25,40){\psellipse[fillstyle=solid](0,0)(1,-1)}
\rput{0}(50,40){\psellipse[fillstyle=solid](0,0)(1,-1)}
\rput{0}(65,40){\psellipse[fillstyle=solid](0,0)(1,-1)}
\rput{0}(75,50){\psellipse[fillstyle=solid](0,0)(1,-1)}
\rput{0}(90,65){\psellipse[fillstyle=solid](0,0)(1,-1)}
\rput{0}(75,30){\psellipse[fillstyle=solid](0,0)(1,-1)}
\rput{0}(100,75){\psellipse[fillstyle=solid](0,0)(1,-1)}
\rput{0}(90,15){\psellipse[fillstyle=solid](0,0)(1,-1)}
\rput{0}(100,5){\psellipse[fillstyle=solid](0,0)(1,-1)}
\rput(10,35){$E_1^1$}
\rput(25,35){$E_2^1$}
\rput(50,35){$E_{p_1-1}^1$}
\rput(110,5){$E_1^2$}
\rput(100,15){$E_2^2$}
\rput(85,30){$E_{p_2-1}^2$}
\rput(110,75){$E_1^3$}
\rput(100,65){$E_2^3$}
\rput(85,50){$E_{p_3-1}^3$}
\rput(75,40){$E_\infty$}
\rput(10,35){}
\end{pspicture}
\caption{The configuration of $(-2)$-curves at infinity}
\label{fg:divisor_graph}
\end{figure}
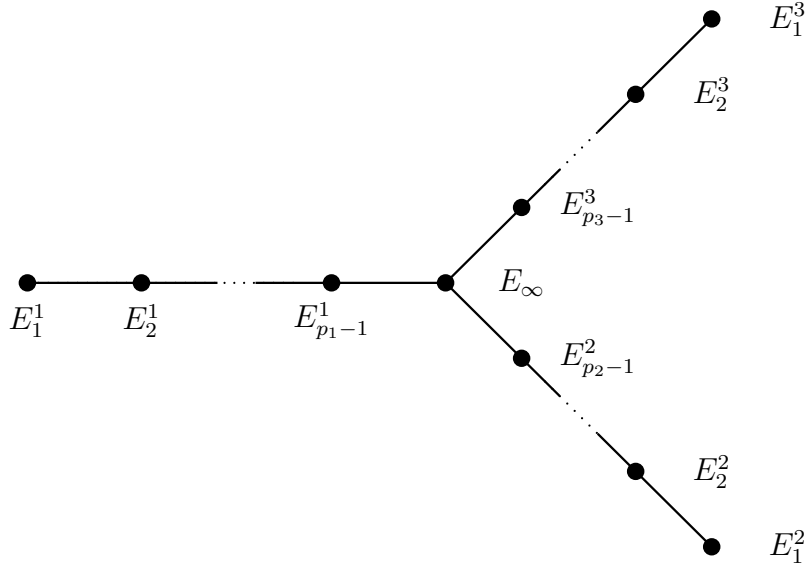

\begin{figure}
\centering
\ifx\JPicScale\undefined\def\JPicScale{1}\fi
\psset{unit=\JPicScale mm}
\psset{linewidth=0.3,dotsep=1,hatchwidth=0.3,hatchsep=1.5,shadowsize=1,dimen=middle}
\psset{dotsize=0.7 2.5,dotscale=1 1,fillcolor=black}
\psset{arrowsize=1 2,arrowlength=1,arrowinset=0.25,tbarsize=0.7 5,bracketlength=0.15,rbracketlength=0.15}
\begin{pspicture}(0,0)(111,76)
\rput(84,40){$\scO_{E_\infty}(-1)$}
\rput{0}(10,40){\psellipse[fillstyle=solid](0,0)(1,-1)}
\psline[linestyle=dotted](10,40)(45,40)
\psline(10,40)(35,40)
\psline(40,40)(65,40)
\psline[linestyle=dotted](85,60)(78,53)
\psline(65,40)(80,55)
\psline(85,60)(100,75)
\psline[linestyle=dotted](85,20)(78,27)
\psline(65,40)(80,25)
\psline(85,20)(100,5)
\psline(75,50)(65,55)
\psline(50,40)(65,55)
\psline(75,30)(65,55)
\psline(65,55)(65,70)
\psline[linestyle=dotted](66,40)(66,55)
\psline[linestyle=dotted](64,40)(64,55)
\rput(58,70){$\scO_Y$}
\rput(58,55){$\scO_{E_\infty}$}
\rput(8,35){$\scO_{E_1^1}(-1)$}
\rput(88,3){$\scO_{E_1^2}(-1)$}
\rput(63,29){$\scO_{E_{p_2-1}^2}(-1)$}
\rput(111,73){$\scO_{E_1^3}(-1)$}
\rput(50,35){$\scO_{E_{p_1-1}^1}(-1)$}
\rput{0}(25,40){\psellipse[fillstyle=solid](0,0)(1,-1)}
\rput{0}(50,40){\psellipse[fillstyle=solid](0,0)(1,-1)}
\rput{0}(65,40){\psellipse[fillstyle=solid](0,0)(1,-1)}
\rput{0}(75,50){\psellipse[fillstyle=solid](0,0)(1,-1)}
\rput{0}(90,65){\psellipse[fillstyle=solid](0,0)(1,-1)}
\rput(25,35){$\scO_{E_2^1}(-1)$}
\rput{0}(65,55){\psellipse[fillstyle=solid](0,0)(1,-1)}
\rput{0}(65,70){\psellipse[fillstyle=solid](0,0)(1,-1)}
\rput{0}(75,30){\psellipse[fillstyle=solid](0,0)(1,-1)}
\rput{0}(100,75){\psellipse[fillstyle=solid](0,0)(1,-1)}
\rput(100,63){$\scO_{E_2^3}(-1)$}
\rput{0}(90,15){\psellipse[fillstyle=solid](0,0)(1,-1)}
\rput{0}(100,5){\psellipse[fillstyle=solid](0,0)(1,-1)}
\rput(79,13){$\scO_{E_2^2}(-1)$}
\rput(87,48){$\scO_{E_{p_3-1}^3}(-1)$}
\end{pspicture}
\caption{A spherical collection of Ebeling and Ploog}
\label{fg:EP_collection}
\end{figure}
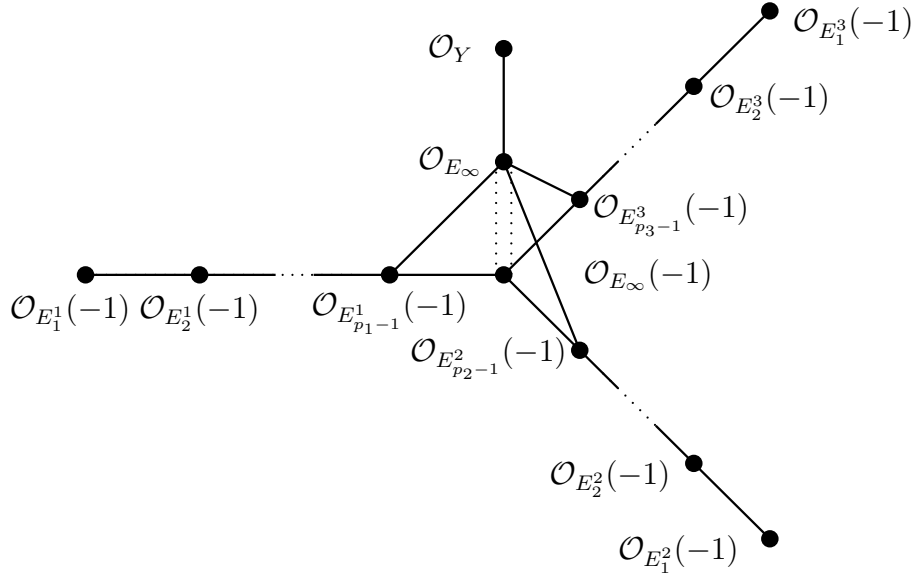

\section{Variations of Hodge structures} \label{sc:vhs}

We discuss Hodge-theoretic aspects
of mirror symmetry
\cite{Aspinwall-Morrison_STKS,
Dolgachev_MSK3,
Morrison_MAMS,
Katzarkov-Kontsevich-Pantev,
Iritani_QCP}
for K3 surfaces
associated with exceptional unimodal singularities
in this section.
Take a dual pair
$((a,b,c;h),(\av, \bv, \cv;\hv))$
of weight systems
associated with exceptional unimodal singularities
appearing in Table \ref{tb:exceptional},
and let $(\scY, \scYv)$ be
a pair of very general hypersurfaces
in $\bP(a,b,c,1)$ and $\bP(\av,\bv,\cv,1)$
of degrees $h$ and $\hv$ respectively.
Let further $(Y, \Yv)$ be the minimal models of $(\scY, \scYv)$,
which are smooth K3 surfaces.
The transcendental lattice of $Y$ is isomorphic
to the Milnor lattice
of the exceptional unimodal singularity
associated with the weight system $(a,b,c;h)$, and
the transcendental lattice of $\Yv$ is isomorphic
to the Milnor lattice
of the dual singularity
associated with $(\av,\bv,\cv;\hv)$.

Let $\bsN \cong \bZ^3$ be a free abelian group of rank three and
$\bsM = \Hom(\bsN, \bZ)$ be the dual group.
%
Recall that the {\em fan polytope}
of a fan is defined as the convex hull
of primitive generators
of one-dimensional cones of the fan.
According to Kobayashi \cite[Theorem 4.3.9]{Kobayashi_DW},
there is a pair $(\Sigma, \Sigmav)$
of unimodular fans
in $\bsN_\bR = \bsN \otimes \bR$ and
$\bsM_\bR = \bsM \otimes \bR$
satisfying the following:
\begin{itemize}
 \item
The fan polytopes $(\Delta, \Deltav)$
of $(\Sigma, \Sigmav)$ are reflexive
and polar dual to each other.
 \item
There is an embedding $\iota : Y \hookrightarrow X$
as an anti-canonical hypersurface
in the toric variety $X = X_\Sigma$
associated with the fan $\Sigma$.
Similarly,
there is an anti-canonical embedding
$\iota : \Yv \hookrightarrow \Xv$
into the toric variety $\Xv = X_\Sigmav$
associated with the fan $\Sigmav$.
 \item
The embedding $\iota : Y \hookrightarrow X$
induces an isomorphism
$
 \iota^* : \NS(X) \to \NS(Y)
$
of the N\'{e}ron--Severi groups.
\end{itemize}
To be more precise,
Kobayashi \cite[Theorem 4.3.9.(6)]{Kobayashi_DW} states
that the ranks of $\iota^* \NS(X)$ and $\NS(Y)$ are equal,
although it is not difficult to check that $\iota^*$ is an isomorphism
by a case-by-case analysis.

Let $\{ b_1, \ldots, b_m \} \subset \bsN$ be the set of generators
of one-dimensional cones of the fan $\Sigma$.
One has the {\em fan sequence}
$$
 0 \to \bL \to \bZ^m \xto{\beta} \bsN \to 0
$$
and the {\em divisor sequence}
$$
 0 \to \bsM \xto{\beta^*} (\bZ^m)^* \xto{} \bL^* \to 0
$$
where $\beta$ sends the $i$th coordinate vector to $b_i$ and
$$
 \Pic(X) \cong H^2(X; \bZ) \cong \bL^*.
$$
Set $\scM = \bL^* \otimes \bCx$
and $\bTv = \bsM \otimes \bCx$
so that one has the exact sequence
$$
 1 \to \bTv \to (\bCx)^m \to \scM \to 1.
$$
The uncompactified mirror $\Yv_\alpha$
of the very general anticanonical hypersurface $Y \subset X$
is defined by
$$
 \Yv_\alpha = \{ y \in \bTv \mid
  W_\alpha(y) = \sum_{i=1}^{m} \alpha_i y^{b_i} = 1 \}
$$
where $\alpha = (\alpha_1, \ldots, \alpha_m) \in (\bCx)^m$.
The closure $\Yv$ of $\Yv_\alpha$ in $\Xv$ for general $\alpha$
is a smooth anti-canonical K3 hypersurface,
which is the compact mirror of $Y$.
Let
$
 \widetilde{\varphiv} : \widetilde{\frakYv} \to (\bCx)^m
$
be the second projection from
$$
 \widetilde{\frakYv} = \{ (y, \alpha) \in \bTv \times (\bCx)^m
   \mid W_\alpha(y) = 1 \}.
$$
The quotient of the family
$
 \widetilde{\varphiv} : \widetilde{\frakYv} \to (\bCx)^m
$
by the free $\bTv$-action
$$
 t \cdot (y, (\alpha_1, \ldots, \alpha_m))
  = (t^{-1} y, (t^{b_1} \alpha_1, \ldots, t^{b_m} \alpha_m))
$$
will be denoted by
$
 \varphiv : \frakYv \to \scM
$
where $\scM = (\bCx)^m / \bTv$.
Choose an integral basis $p_1, \ldots, p_r$ of $\bL^* \cong \Pic X$
such that each $p_i$ is nef.
This gives the corresponding coordinate
$q_1, \ldots, q_r$ on $\scM = \bL^* \otimes \bCx$.
Let $\Uv' \subset \scM$ be a sufficiently small neighborhood
of $q_1 = \cdots = q_r = 0$
so that the closure $\Yv$ of $\Yv_\alpha$ in $\Xv$ is smooth
for $q_1 \cdots q_r \ne 0$,
and $\Uv$ be the universal cover of $\Uv'$.
The {\em B-model VHS}
$
 (\HBZ, \nabla^B, \scrF_B^\bullet, Q_B)
$
on $\Uv$ consists of the pull-back
$\HBZ$
of the local system
$
 \gr_2^W \! R^2 \varphiv_! \, \bZ_\frakYv,
$
the Gauss--Manin connection $\nabla^B$
on $\HB = \HBZ \otimes \scO_{\Uv}$,
the Hodge filtration $\scrF_B^\bullet$, and
the polarization $Q_B$ given by
$$
 Q_B(\omega_1, \omega_2)
  = \int_{\Yv_{\alpha}} \omega_1 \cup \omega_2.
$$
The subsystem of $\HBZ$
consists of vanishing cycles of $W_\alpha$
will be denoted by $\HBZ^\vc$.

On the A-model side, let
$$
 H^\bullet_\amb(Y; \bC)
  = \Image(\iota^* : H^\bullet(X; \bC) \to H^\bullet(Y; \bC))
$$
be the subspace of $H^\bullet(Y; \bC)$
coming from the cohomology classes of the ambient toric variety,
and set
$$
 U = \{ \sigma = \beta + \sqrt{-1} \omega \in H^2_\amb(Y; \bC) \mid
   \la \omega, d \ra \gg 0 \text{ for any non-zero }
   d \in \Eff(Y) \}.
$$
where $\Eff(Y)$ is the semigroup of effective curves.
This open subset $U$ is considered
as a neighborhood of the large radius limit point.
The surjectivity of
$
 \iota^* : \NS(X) \to \NS(Y)
$
implies that $U$ here coincides with $U$
given in Section \ref{sc:introduction}.
Let $(\sigma^i)_{i=1}^r$ be the coordinate on $H^2_\amb(Y; \bC)$
dual to the basis $(p_i)_{i=1}^r$;
$\sigma = \sum_{i=1}^r \sigma^i p_i$.

The {\em ambient A-model VHS}
$(\HA', {\nabla^A}', {\scrF_A'}^\bullet, Q_A)$
consists
(\cite[Definition 6.2]{Iritani_QCP},
cf. also \cite[Section 8.5]{Cox-Katz})
of the locally free sheaf
$
 \HA' = H_\amb^\bullet(Y) \otimes \scO_U,
$
the Dubrovin connection
$$
 {\nabla^A}' = d + \sum_{i=1}^r (p_i \circ_\sigma) \, d \sigma^i
  : \HA \to \HA \otimes \Omega_{U}^1,
$$
the Hodge filtration
$$
 {\scrF_A'}^p = H_\amb^{4-2p}(Y) \otimes \scO_U,
$$
and the Mukai pairing
$$
 Q_A : \HA \otimes \HA \to \scO_U
$$
which is symmetric and ${\nabla^A}'$-flat.
Let $L_Y(\sigma)$ be the fundamental solution
of the quantum differential equation,
that is, the $\End(H^\bullet_\amb(Y; \bC))$-valued functions
satisfying
$$
 {\nabla_i^A}' L_Y(\sigma) = 0, \qquad i = 1, \ldots, r
$$
and
$
 L_Y(\sigma) = \id + O(\sigma).
$
Since $Y$ is a K3 surface,
the quantum cup product $\circ_\sigma$ coincides
with the ordinary cup product, and
the fundamental solution is given by
$$
 L(\sigma) = \exp(- \sigma).
$$
Let $\HAC' = \Ker {\nabla^A}'$ be the $\bC$-local system
associated with ${\nabla^A}'$ and define the integral local subsystem
$\HAZ' \subset \HAC'$ as
$$
 \HAZ'
  = \lc L_Y \lb \ch(\scE) \sqrt{\td(X)} \rb
      \biggm| \scE \in K(Y) \rc.
$$
Since $L$ is a fundamental solution,
the morphism
$$
\begin{array}{cccc}
 L : & \HA(U) & \to & \HA'(U) \\
 & \vin & & \vin \\
 & s & \mapsto & L (s)
\end{array}
$$
of $\scO_U$-modules is flat
(i.e.
$
 {\nabla^A}' L - L \nabla^A = 0
$
)
and induces an isomorphism
$
 H_A \simto H_A'
$
of $\bC$-local systems.
This isomorphism is compatible
with Hodge filtrations
since the generator $e^\sigma$
of $\scrF^2$ goes to $1 \in {\scrF'^2}$.
It preserves the polarizations
since $L$ is an isometry of the Mukai lattice,
and it is obvious from the definition
that $L$ preserves the integral structures.
The local system $H_{A, \, \bZ}^\amb$
is defined as the local subsystem of $H_{A, \, \bZ}$
corresponding to
$
 \scN(Y)^\amb = \{ \iota^* \scE \mid \scE \in \scN(X) \}
  \subset \scN(Y).
$

Let $u_i \in H^2(X; \bZ)$ be the Poincar\'{e} dual
of the toric divisor
corresponding to the one-dimensional cone
$\bR \cdot b_i \in \Sigma$ and
$v = u_1 + \cdots + u_m$
be the anticanonical class.
Givental's {\em $I$-function} is defined as the series
$$
 I_{X, Y}(q, z) = e^{p \log q / z}
  \sum_{d \in \Eff(X)} q^d \,
  \frac{
   \prod_{k=-\infty}^{\la d, v \ra} (v + k z)
   \prod_{j=1}^m \prod_{k=-\infty}^0
    (u_j + k z)}
  {\prod_{k=-\infty}^0
    (v + k z)
   \prod_{j=1}^m \prod_{k=-\infty}^{\la d, u_j \ra}
    (u_j + k z)},
$$
which is a multi-valued map from $\Uv'$
(or a single-valued map from $\Uv$)
to the classical cohomology ring $H^\bullet(X; \bC[z^{-1}])$.
Givental's {\em $J$-function} is defined by
$$
 J_Y(\tau, z) = L_Y(\tau, z)^{-1}(1)
  = \exp(\tau/z).
$$
If we write
$$
 I_{X, Y}(q, z) = F(q) + \frac{G(q)}{z} + \frac{H(q)}{z^2} + O(z^{-3}),
$$
then Givental's mirror theorem
\cite{Givental_EGWI,
Givental_MTTCI,
Coates-Givental}
states that
$$
 \Euler(\omega_X^{-1}) \cup I_{X, Y}(q, z)
  = F(q) \cdot \iota_* J_Y(\varsigma(q), z)
$$
where $\Euler(\omega_X^{-1}) \in H^2(X; \bZ)$
is the Euler class of the anticanonical bundle of $X$,
and the {\em mirror map}
$
 \varsigma(q) : \Uv \to H^2_\amb(Y; \bC)
$
is defined by
$$
 \varsigma(q) = \iota^* \lb \frac{G(q)}{F(q)} \rb.
$$
The relation between $\tau = \varsigma(q)$ and
$\sigma = \beta + \sqrt{-1} \omega$ is given by
$\tau = 2 \pi \sqrt{-1} \sigma$,
so that $\Im(\sigma) \gg 0$ corresponds to $\exp(\tau) \sim 0$.
The functions $F(q)$, $G(q)$ and $H(q)$ satisfy
the Gelfand--Kapranov--Zelevinsky hypergeometric differential equations,
and give periods for the B-model VHS
$
 (\HB, \nabla^B, \scrF_B^\bullet, Q_B).
$
The isomorphism of integral structures
is due to Iritani:

\begin{theorem}[{Iritani \cite[Theorem 6.9]{Iritani_QCP}}]
 \label{th:Iritani}
There is an isomorphism
$$
 \Mir_\scY : \varsigma^*
  (\HAZ^\amb, \nabla^A, \scrF_A^\bullet, Q_A)
  \simto (\HBZ^\vc, \nabla^B, \scrF_B^\bullet, Q_B)
$$
of integral variations of pure and polarized Hodge structures.
\end{theorem}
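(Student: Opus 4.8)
The plan is to obtain Theorem \ref{th:Iritani} as a direct specialization of Iritani's general result \cite[Theorem 6.9]{Iritani_QCP}, which constructs exactly such an isomorphism of integral, pure, polarized variations of Hodge structures whenever $Y$ is a smooth anticanonical hypersurface in a smooth toric weak Fano variety $X$. The first step is to check that we are genuinely in that situation: by Kobayashi's realization \cite[Theorem 4.3.9]{Kobayashi_DW} the minimal model $Y$ of $\scY$ embeds as an anticanonical hypersurface in a smooth toric weak Fano variety $X = X_\Sigma$, and the fan sequence, the divisor sequence, the identification $\Pic X \cong \bL^*$, and the nef integral basis $p_1, \ldots, p_r$ of $\Pic X$ introduced above are all in place, so that the hypotheses of Iritani's theorem hold verbatim.

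The second step is to match the three pieces of data with those in \cite{Iritani_QCP}. On the A-side, because $Y$ is a K3 surface the quantum product $\circ_\sigma$ is the ordinary cup product and the fundamental solution is simply $L_Y(\sigma) = \exp(-\sigma)$, so the integral structure $\HAZ^\amb$ --- the image under $L_Y$ of the classes $\ch(\scE)\sqrt{\td(X)}$ with $\scE \in \scN(X)$ --- is precisely Iritani's $\widehat{\Gamma}$-integral structure restricted to the ambient part (on a surface the $\widehat{\Gamma}$-class contributes only through the $\sqrt{\td}$ factor). On the B-side, the family $\varphiv : \frakYv \to \scM$, the neighborhood $\Uv$ of the large complex structure limit, the Gauss--Manin connection $\nabla^B$, the Hodge filtration $\scrF_B^\bullet$, the polarization $Q_B$, and the sublattice $\HBZ^\vc$ of vanishing cycles of $W_\alpha$ are the ones used by Iritani. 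Finally, the mirror map $\varsigma$ extracted from Givental's $I$-function $I_{X,Y}$ via $\varsigma(q) = \iota^*(G(q)/F(q))$ is Iritani's mirror map.

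Granting these identifications, Givental's mirror theorem $\Euler(\omega_X^{-1}) \cup I_{X,Y}(q,z) = F(q) \cdot \iota_* J_Y(\varsigma(q), z)$, recorded above, gives an isomorphism of the two flat bundles with their connections; it respects the Hodge filtrations because the generator $e^\sigma$ of $\scrF_A^2$ is carried to the period vector read off from $I_{X,Y}$ that spans $\scrF_B^2$, and it respects the polarizations because $L_Y$ is an isometry of the Mukai pairing while $Q_B$ is the cup product. The step I expect to be the crux --- and which I would invoke from \cite[Theorem 6.9]{Iritani_QCP} rather than reprove --- is the compatibility of the \emph{integral} lattices: that the period of the holomorphic two-form over an integral vanishing cycle $\gamma$ on $\Yv_\alpha$ equals, after substitution of the mirror map, the corresponding component of $L_Y(\ch(\scE)\sqrt{\td(X)})$ for a suitable $\scE \in \scN(X)$. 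Iritani establishes this by a Gamma-function (Mellin--Barnes) evaluation of the GKZ periods $F$, $G$, $H$ paired against Chern characters; the remaining task is the bookkeeping of verifying that $\HAZ^\amb$ and $\HBZ^\vc$ as defined here are exactly the sub-variations of Hodge structure between which that isomorphism is stated, after which the assertion follows.
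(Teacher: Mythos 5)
Your proposal takes essentially the same route as the paper: Theorem \ref{th:Iritani} is stated there as a direct citation of Iritani's result, with no proof supplied beyond the surrounding setup (Kobayashi's toric realization, the fan and divisor sequences, the $I$-function and mirror map) that places $Y \subset X$ in the situation covered by \cite[Theorem 6.9]{Iritani_QCP}. Your additional bookkeeping --- matching $\HAZ^\amb$ with the ambient part of the $\widehat{\Gamma}$-integral structure and $\HBZ^\vc$ with the vanishing-cycle sublattice --- is exactly the verification the paper leaves implicit, so the two approaches coincide.
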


The following lemma concludes the proof of Corollary \ref{cr:vhs}:

\begin{lemma}
One has equalities
$$
 \HAZ^\amb = \HAZ
$$
and
$$
 \HBZ^\vc = \HBZ
$$
of integral local systems.
\end{lemma}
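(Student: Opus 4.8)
The plan is to translate each equality into a statement about lattices. Since $\HAZ$ is the trivial local system with fibre $\scN(Y)$ and $\HAZ^\amb$ the sub-local system with fibre $\scN(Y)^\amb = \iota^*\scN(X)$, the first equality $\HAZ^\amb = \HAZ$ amounts to the surjectivity of $\iota^*\colon \scN(X)\to\scN(Y)$. For the second, I would identify $\HBZ = \gr_2^W\!R^2\varphiv_!\,\bZ_\frakYv$ with the variation of transcendental lattices of $\Yv$, and then use Theorem \ref{th:Iritani} together with a discriminant count.

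To see that $\iota^*$ is surjective, recall from the proof of Proposition \ref{prop:NY} that $\scN(Y)$ is generated by $[\scO_Y]$, by $\NS(Y)$, and by the point class $[\scO_p]$. Here $[\scO_Y] = \iota^*[\scO_X]$, and $\NS(Y) = \iota^*\NS(X)$ by \cite[Theorem 4.3.9]{Kobayashi_DW} together with the case-by-case check, noted there, that $\iota^*$ is an isomorphism. To produce $[\scO_p]$ I would compute Mukai vectors on the K3 surface $Y$: for classes $d_1, d_2\in\NS(X)$ with restrictions $e_i = \iota^*d_i\in\NS(Y)$,
$$
 [\iota^*\scO_X(d_1+d_2)] - [\iota^*\scO_X(d_1)] - [\iota^*\scO_X(d_2)] + [\scO_Y] = (e_1\cdot e_2)\,[\scO_p]
$$
in $\scN(Y)$, and the left-hand side lies in the subgroup $\scN(Y)^\amb = \iota^*\scN(X)$. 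Since $\iota^*$ is onto in N\'{e}ron--Severi, one may take $d_1, d_2$ with $e_1 = [E_\infty]$ and $e_2$ the class of a boundary $(-2)$-curve meeting $E_\infty$; then $e_1\cdot e_2 = 1$, so $[\scO_p]\in\scN(Y)^\amb$ and $\scN(Y)^\amb = \scN(Y)$, i.e.\ $\HAZ^\amb = \HAZ$. This is exactly the step that fails for the quartic surface in $\bP^3$, where $\NS(Y) = \bZ H$ with $H^2 = 4$, so $e_1\cdot e_2\in 4\bZ$ always and only $4\,[\scO_p]$ is obtained.

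For the B-model equality, Theorem \ref{th:Iritani} together with $\HAZ^\amb = \HAZ$ shows that $\HBZ^\vc$ is isometric to $\HAZ$, hence to $\scN(Y)$ with the Mukai pairing; by Proposition \ref{prop:NY} this is the lattice $\That(p_1,p_2,p_3)$, non-degenerate of rank $p_1 + p_2 + p_3$. On the other hand each fibre $\Yv_\alpha$ of $\varphiv$ is the complement, in the K3 surface $\Yv$, of a simply connected configuration of $(-2)$-curves generating $\NS(\Yv)$ over $\bZ$ --- the analogue for $\Yv$ of Figure \ref{fg:divisor_graph}. Since $H^1$ of a tree vanishes, the Gysin sequence identifies $\gr_2^W\!R^2\varphiv_!\,\bZ_\frakYv$ with the transcendental lattice $T_{\Yv} = \NS(\Yv)^\perp\subset H^2(\Yv;\bZ)$, and by Pinkham \cite{Pinkham_strange-duality} and Dolgachev--Nikulin \cite{Dolgachev_IQF,Nikulin_ISBF} this lattice is isometric to the Milnor lattice of $\fv$, which in turn is $\That(p_1,p_2,p_3)$. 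Thus $\HBZ^\vc\subseteq\HBZ$ is an inclusion of lattices of the same rank and the same non-zero discriminant, so it is an equality; hence $\HBZ^\vc = \HBZ$.

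The hard part will be the last identification: one must carry out the Gysin-sequence argument with integral coefficients, which requires $\NS(\Yv)$ to be generated over $\bZ$ (not merely rationally) by the boundary curves, and one must know the isometry $T_{\Yv}\cong\That(p_1,p_2,p_3)$ integrally rather than only over $\bQ$; both facts rest on the case-by-case analysis of the 14 weight systems in \cite{Kobayashi_DW} and on the integral form of the strange-duality correspondence of \cite{Pinkham_strange-duality,Dolgachev_IQF,Nikulin_ISBF}. The A-model side, by contrast, is a short and uniform computation.
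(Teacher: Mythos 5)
Your proposal is correct and follows essentially the same route as the paper: reduce $\HAZ^\amb=\HAZ$ to surjectivity of $\iota^*:\scN(X)\to\scN(Y)$, obtain the point class from two ambient divisors whose restrictions meet in a single point (your Mukai-vector identity is just an explicit rendering of the paper's $\iota^*(\Dtilde\cdot\Etilde)=D\cdot E$), and then identify $\HBZ$ with the transcendental lattice $\That(\bsgammav)=\That(\bsdelta)$ via the exact sequence of the pair $(\Yv,D)$ and conclude by comparing discriminants of the full-rank sublattice $\HBZ^\vc\subseteq\HBZ$. No gaps; the quartic-surface remark and the caveats in your last paragraph match the paper's own discussion.
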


\begin{proof}
To prove the equality
$
 \HAZ^\amb = \HAZ,
$
it suffices to show that the map
$\iota^* : \scN(X) \to \scN(Y)$
between the numerical Grothendieck groups
is surjective.
First note that $\NS(Y) = \iota^* \NS(X)$
by our choice of $X$
at the beginning of this section.
It is easy to see
from Figure \ref{fg:divisor_graph}
that one can choose a pair
$(D, E)$ of divisors on $Y$
such that their intersection
$D \cdot E$ is a point.
Take a pair $(\Dtilde, \Etilde)$ of divisors on $X$
such that $\iota^* \Dtilde = D$ and
$\iota^*\Etilde = E$.
Then one has $\iota^*(\Dtilde \cdot \Etilde) = D \cdot E$,
so that the class of a point also belongs to $\iota^* \scN(X)$.
The class of the structure sheaf
clearly belongs to $\iota^* \scN(X)$
since $\iota^* \scO_X = \scO_Y$, and
the equality
$
 \HAZ^\amb = \HAZ
$
is proved.

For the equality
$
 \HBZ^\vc = \HBZ,
$
first note that the fiber of
$
 \HBZ = \gr_2^W \! R^2 \varphiv_! \, \bZ_\frakYv
$
at $\alpha \in \Uv$ is the weight 2 part
$
 \gr_2^W \! H_c^2(\Yv_\alpha;\bZ)
$
of the cohomology group of $\Yv_\alpha$
with compact support.
Let
$
 D = \Yv \setminus \Yv_\alpha
$
be the divisor at infinity
in the smooth compactification
$\Yv$ of $\Yv_\alpha$.
The long exact sequence
$$
 \cdots \to
  H^1(D; \bZ)
 \to
  H^2(\Yv, D; \bZ)
 \to
  H^2(\Yv; \bZ)
 \to
  H^2(D; \bZ)
 \to \cdots
$$
associated with the pair
$(\Yv, D)$
shows that the weight 2 part of
$
 H_c^2(\Yv_\alpha;\bZ)
  \cong H^2(\Yv, D;\bZ)
$
is the kernel of
$
 H^2(\Yv; \bZ) \to H^2(D; \bZ).
$
This is equal to the transcendental lattice of $\Yv$
for very general $\alpha$,
which is well known
\cite{Pinkham_strange-duality, Dolgachev_IQF, Nikulin_ISBF}
to be isomorphic to 
$\That(\bsgammav)$,
where $\bsgammav$ is the Gabrielov number
of the corresponding exceptional unimodal singularity.

On the other hand,
the local system
$
 \HBZ^\vc
$
is isomorphic to 
$
 \HAZ^\amb
$
by Theorem \ref{th:Iritani},
which is isomorphic to $\That(\bsdelta)$
by Proposition \ref{prop:NY}
where $\bsdelta$ is the Dolgachev number
of the singularity associated with $Y$.
Since the pair $(Y, \Yv)$ comes from a strange dual pair
of exceptional unimodal singularities,
one has $\bsgammav = \bsdelta$.
It follows that the determinants of the Gram matrices
of the generators of $\HBZ$ and
$\HBZ^\vc$ are the same.
Since $\HBZ^\vc$ is a sublattice of $\HBZ$,
this implies $\HBZ^\vc = \HBZ$ and the lemma is proved.
\end{proof}

\bibliographystyle{amsalpha}
\bibliography{bibs}

\noindent
Masanori Kobayashi

Department of Mathematics and Information Sciences,
Tokyo Metropolitan University,
1-1 Minami-Osawa,
Hachioji-shi
Tokyo,
192-0397,
Japan

{\em e-mail address}\ : \  kobayashi-masanori@tmu.ac.jp
\ \vspace{0mm} \\

\noindent
Makiko Mase

Department of Mathematics and Information Sciences,
Tokyo Metropolitan University,
1-1 Minami-Osawa,
Hachioji-shi
Tokyo,
192-0397,
Japan

{\em e-mail address}\ : \  mase-makiko@ed.tmu.ac.jp
\ \vspace{0mm} \\

\noindent
Kazushi Ueda

Department of Mathematics,
Graduate School of Science,
Osaka University,
Machikaneyama 1-1,
Toyonaka,
Osaka,
560-0043,
Japan.

{\em e-mail address}\ : \  kazushi@math.sci.osaka-u.ac.jp

\end{document}